\numberwithin{equation}{section}
\newtheorem{theorem}{Theorem}[section]
\newtheorem{lemma}[theorem]{Lemma}
\newtheorem{corollary}[theorem]{Corollary}
\newtheorem{remark}[theorem]{Remark}
\providecommand{\Div}{\operatorname{div}}          % Divergence
\providecommand{\curl}{\operatorname{{\bf curl}}}  % Standard curl operator
\providecommand*{\Dist}[2]{\operatorname{dist}({#1};{#2})}   % distance
\providecommand*{\Dist}[2]{\Dist{#1}{#2}}
\providecommand*{\Span}[1]{\operatorname{Span}\left\{{#1}\right\}}     % Span
\providecommand{\Supp}{\operatorname{supp}}                            % support
\renewcommand{\Re}{\operatorname{Re}}             % real part
\renewcommand{\Im}{\operatorname{Im}}             % imaginary part
\newcommand{\Vi}{{\mathbf{i}}}
\newcommand{\Bn}{{\boldsymbol{n}}}
\newcommand{\Bp}{{\boldsymbol{p}}}
\newcommand{\Br}{{\boldsymbol{r}}}
\newcommand{\Bv}{{\boldsymbol{v}}}
\newcommand{\Bz}{{\boldsymbol{z}}}
\newcommand{\BE}{{\boldsymbol{E}}}
\newcommand{\BH}{{\boldsymbol{H}}}
\newcommand{\thetabf}{\boldsymbol{\theta}}
\newcommand{\Ci}{\mathcal{I}}
\newcommand{\Ck}{\mathcal{K}}
\newcommand{\Cp}{\mathcal{P}}
\newcommand*{\N}[1]{\left\|{#1}\right\|}     % Double bar norm
\newcommand*{\SN}[1]{\left|{#1}\right|}      % Single bar norm
\newcommand*{\Lp}[2][\defaultdomain]{L^{#2}({#1})}
\newcommand*{\NLp}[3][\defaultdomain]{\N{#2}_{\Lp[#1]{#3}}}
\newcommand*{\Ltwo}[1][\defaultdomain]{\Lp[#1]{2}}
\newcommand*{\NLtwo}[2][\defaultdomain]{\NLp[#1]{#2}{2}}
\newcommand*{\Hm}[2][\defaultdomain]{H^{#2}({#1})}
\newcommand*{\bHm}[3][\defaultdomain]{H_{#3}^{#2}({#1})}
\newcommand*{\Hone}[1][\defaultdomain]{\Hm[#1]{1}}
\newcommand*{\zbHone}[1][\defaultdomain]{\bHm[#1]{1}{0}}
\newcommand*{\NHone}[2][\defaultdomain]{{\N{#2}}_{\Hone[{#1}]}}
\newcommand*{\Hdiv}[1][\defaultdomain]{\boldsymbol{H}(\Div,{#1})}
\newcommand*{\NHdiv}[2][\defaultdomain]{\N{#2}_{\Hdiv[#1]}}
\newcommand{\D}{\mathrm{d}}
\newcommand{\ol}{\overline}
\newcommand{\be}{\begin{eqnarray}}
\newcommand{\ee}{\end{eqnarray}}
\newcommand{\ben}{\begin{eqnarray*}}
\newcommand{\een}{\end{eqnarray*}}
\newcommand{\pml}{\mathrm{pml}}
\definecolor{rot}{rgb}{1.000,0.000,0.000}
\definecolor{rot1}{rgb}{0.000,0.000,1.000}
\begin{document}
\renewcommand{\theequation}{\arabic{section}.\arabic{equation}}

\begin{titlepage}
  \title{A PML method for signal-propagation problems in axon}

\author{
Xue Jiang\thanks{Department of Mathematics, Faculty of Science, Beijing University of Technology, Beijing 100124, China. Email:{\tt jxue@lsec.cc.ac.cn}},
Maohui Lyu\thanks{LSEC, Institute of Computational Mathematics and Scientific/Engineering Computing, Academy of Mathematics and Systems Science, Chinese Academy of Sciences, Beijing 100190, China. Email:{\tt mlyu@lsec.cc.ac.cn}},
Tao Yin\thanks{LSEC, Institute of Computational Mathematics and Scientific/Engineering Computing, Academy of Mathematics and Systems Science, Chinese Academy of Sciences, Beijing 100190, China. Email:{\tt yintao@lsec.cc.ac.cn}},
Weiying Zheng\thanks{LSEC, Institute of Computational Mathematics and Scientific/Engineering Computing, Academy of Mathematics and Systems Science, Chinese Academy of Sciences, Beijing 100190, China. Email:{\tt zwy@lsec.cc.ac.cn}}
}
\end{titlepage}
\maketitle
%%\vspace{.2in}
%
\begin{abstract}
This work is focused on the modelling of signal propagations in myelinated axons to characterize the functions of the myelin sheath in the neural structure. Based on reasonable assumptions on the medium properties, we derive a two-dimensional neural-signaling model in cylindrical coordinates from the time-harmonic Maxwell's equations. The well-posedness of model is established upon Dirichlet boundary conditions at the two ends of the neural structure and the radiative condition in the radial direction of the structure. Using the perfectly matched layer (PML) method, we truncate the unbounded background medium and propose an approximate problem on the truncated domain. The well-posedness of the PML problem and the exponential convergence of the approximate solution to the exact solution are established. Numerical experiments based on finite element discretization are presented to demonstrate the theoretical results and the efficiency of our methods to simulate the signal propagation in axons.
\end{abstract}
  {\bf Keywords:} Neural signal transmission, myelin sheath, Maxwell equation, perfectly-matched-layer

\section{Introduction}
\label{sec:intro}

The problem of the signal transmission in neural system is one of the most fundamental and important issues in neuroscience. Axons are the primary transmission lines of the nervous system and can be characterized into two types: myelinated and unmyelinated axons. It is studied in \cite{Metal14} that generation of new myelin is important for learning motor skills. For the myelinated axons, the myelin sheath is a layer of membrane wrapped around the axons and gaps in the myelin sheath, known as nodes of Ranvier, occur at evenly spaced intervals. It is understood that the functions of the myelin sheath and nodes of Ranvier are to insulate and cause the saltatory conduction of the action potential. However, it seems extremely difficult to experimentally observe the signal propagation in axon and quantitatively or qualitatively describe the effects of myelin sheath and nodes of Ranvier. Therefore, it has become more and more important to derive and investigate efficient mathematical and physical models to numerically simulate the transmission of signal in axon.

In open literature, the transmission of signals in axon is commonly treated by an equivalent circuit, see for example \cite{HH521}. However, the model of an equivalent circuit, wherein frequencies in the kHz range is considered, is not consistent with the fact that biological macromolecules usually exhibit collective vibrations in the electromagnetic field in the infrared to terahertz (THz) spectral range \cite{LHGM08,LD10,WL99} and furthermore, the roles of myelin features remain poorly understood. Recently, a novel dielectric waveguide model is proposed in \cite{L19} to explain the mechanism of infrared and terahertz neurotransmission through myelinated nerves. It is experimentally demonstrated in \cite{L19} that, at a certain THz/infrared frequency region, myelin exhibits a significantly higher refractive index than axons which supports the hypothesis that the myelin sheath serves as a dielectric waveguide. Then based on an electromagnetic waveguide model, the explicit waveguide modes can be calculated by assuming that an infinite axon is completely wrapped by the myelin sheath without any node of Ranvier. But this analytic method is not applicable for the case of finite myelinated axon with nodes of Ranvier, for which, only a schematic illustration has been provided. The numerical simulation of analogous waveguide models for myelinated axon has also been considered in \cite{ZMGT18,ZZML22}. However, these results still can not effectively characterize the functions of myelin sheath, and moreover, there is no mathematical and numerical analysis, for example, the well-posedness and convergence, for the considered model and associated numerical solver.

Inspired by \cite{L19,ZMGT18,ZZML22}, this paper devotes to proposing a novel waveguide problem of the electromagnetic waves to model the signal transmission in myelinated axon and providing solid mathematical analysis and numerical demonstration for the deduced model, for which the corresponding numerical analysis is left for future works. As shown in Figure~\ref{model}(a), the signal propagation in the integrated neuron is quite complicated. Given an electromagnetic signal at one end of the axon, the propagation interested in this work is only restricted to the axon region as well as the myelin sheath.  As shown in Figure~\ref{model}(b) in cylindrical coordinates, let $D_1, D_2$ denote the domain of axon and myelin sheath, respectively. Note that the skin depth is much larger than the neurological scale. Thus, exterior to the myelinated axon, an infinite domain $D^c$ of fluid is assumed, see Figure~\ref{model}(b). In addition, the cross section of myelinated axon can be viewed as a concentric structure. Then from the classical Maxwell's equations and assuming the wave fields to be always perpendicular to the direction of propagation (i.e., the length direction of axon) and independent of the angle variable, new time-harmonic TM and TE models in cylindrical coordinates, are introduced and appropriate boundary conditions are imposed at the end of axon $\Gamma_{\mathrm{left}}$, $\Gamma_{\mathrm{right}}$ for approximation. Given an incident field on $\Gamma_{\mathrm{left}}$, a zero mixed Dirichlet and Neumann boundary condition on $\Gamma_{\mathrm{right}}$ is considered. The discussion of the corresponding time-dependent model remains individually interesting and is left for future works.

\begin{figure}[htbp]
\centering
\begin{tabular}{cc}
\includegraphics[scale=0.3]{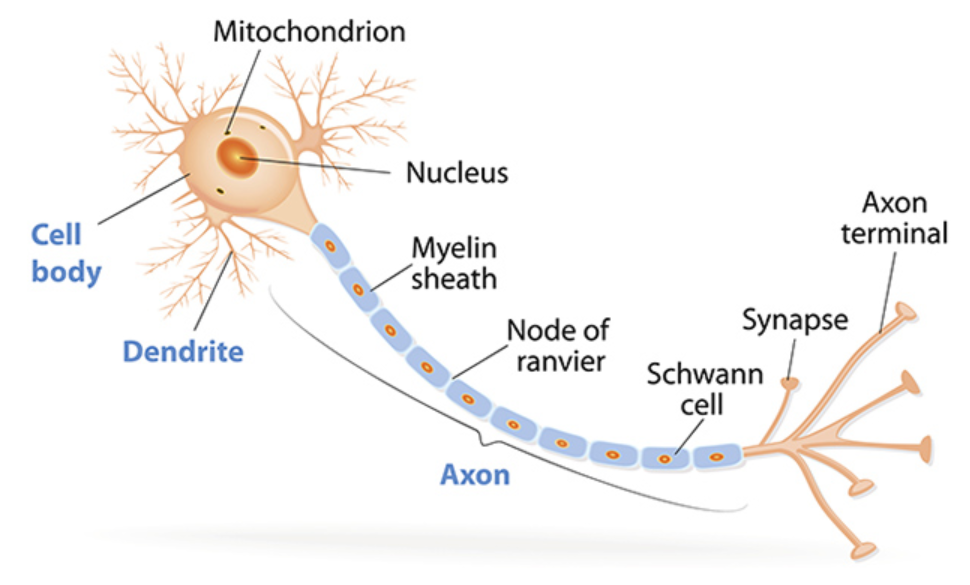} &
\includegraphics[scale=0.3]{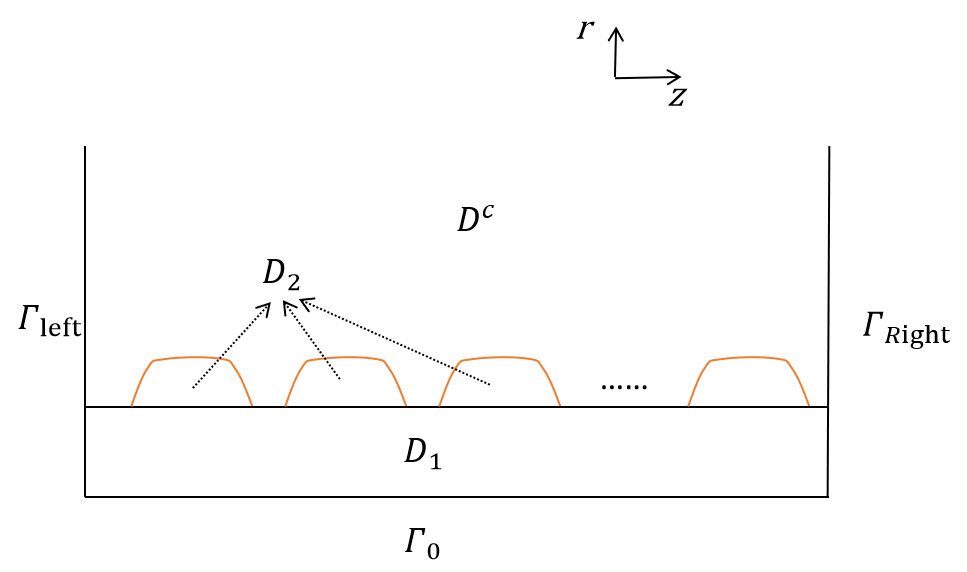} \\
(a) & (b)
\end{tabular}
\caption{Left: a cartoon representation of a neuron with a myelinated axon~\cite{axon}. Right: geometric settings for the mathematical model.}
\label{model}
\end{figure}

Then the main theoretical part of this work lies in proving the well-posedness of the derived electromagnetic problem and the convergence of the solution to an approximate problem resulting from the perfectly-matched-layer (PML) truncation. In both mathematical and engineering communities, related wave propagation problems in electromagnetics, as well as in acoustics and elastodynamics, have been extensively investigated \cite{CK98,monk03,KGBB79}, wherein the Dirichlet-to-Neumann (DtN) map \cite{F83,HW13,KG89} or the PML stretching \cite{BW05,BP07,CZ17} is commonly utilized to truncate the unbounded domain. For the newly derived electromagnetic problem, an exact DtN map and appropriate Sobolev spaces in cylindrical coordinates are introduced to prove the well-posedness of the corresponding variational problem as well as an inf-sup condition related to the weak formulation. The DtN truncated problem can be used for the numerical implementation, however, the DtN map is defined as an infinite series which is nonlocal and requires to be truncated into a finite sum in practical computation. Compared with the DtN technique, The PML method, first proposed by B\'erenger for solving the time-dependent Maxwell equations \cite{B94} and considered in \cite{ZMGT18,ZZML22} for different models of nerve signal propagation, takes advantages in numerical implementation and provides a viable alternative for solving the considered problem. Relying on the error estimate between the exact DtN map and the PML equivalent DtN map, the well-posedness and exponential convergence of the solution to the PML truncated problem are established. Then numerical experiments are reported to both verify our theoretical results and provide efficient simulation results to characterize the signal transmission phenomenon in axon, especially, the existence of myelin sheath can gather the electromagnetic wave to propagate mainly in myelin sheath.

The remainder of this paper is organized as follows. Section~\ref{sec:model} proposes an electromagnetic scattering problem in cylindrical coordinates to model the signal transmission in myelinated axon. Then the well-posedness of the problem in appropriate Sobolev space is investigated in Section~\ref{sec:variational} by utilizing an exact DtN map to reduce the original problem onto a bounded domain. The PML stretching technique is introduced in Section~\ref{sec:trupml} and then the well-posedness of the PML truncated problem, as well as the exponential convergence of the solutions with respect to PML parameters, is proved. Numerical experiments are presented in Section~\ref{sec:numer} to demonstrate the accuracy of the numerical solver and the efficiency of the mathematical model, especially, to provide an intuitive numerical description for the signal transmission in myelinated axon.

\section{Mathematical model}\label{sec:model}

Following the discussion in introduction, as shown in Figure~\ref{model}(b), this section devotes to deriving a new electromagnetic problem in cylindrical coordinates to model the signal propagation in axon.

\subsection{Governing equations}
\label{subsec:govern-eqn}

We begin with the Maxwell's equations given by
\begin{subequations}\label{eq:1st}
\begin{align}
-i\omega\mu \boldsymbol{H} + \curl{\boldsymbol{E}} &= \boldsymbol{0},\label{eq:1st:H}\\
i\omega\varepsilon\boldsymbol{E} + \curl{\boldsymbol{H}} &= \sigma\boldsymbol{E}.\label{eq:1st:E}
\end{align}
\end{subequations}
Here, $\varepsilon$, $\mu$ and $\sigma$ denote the electric permittivity, the magnetic permeability, and the electric conductivity, respectively and we assume that $\mu$ is a positive constant and $\varepsilon,\sigma$ are nonnegative and piece-wise constant. Compared with the axon, the electric conductivity in myelin sheath and water can be ignored and thus, $\sigma=0$ in $D_2$ and $D^c$. By eliminating $\boldsymbol{H}$ or $\boldsymbol{E}$, the second-order equation
\begin{equation}\label{eq:2nd:E}
\curl{\curl{\boldsymbol{E}}} - (k^2 + i\omega\mu\sigma) \boldsymbol{E} = \boldsymbol{0},
\end{equation}
holds for the electric field $\BE$ where $k = \omega\sqrt{\varepsilon\mu}$, and the second-order equation
\begin{equation}\label{eq:2nd:H}
\curl\big[(k^2 + \Vi\omega\mu\sigma)^{-1}\curl\BH\big] - \boldsymbol{H} =  \boldsymbol{0}.
\end{equation}
holds for the magnetic field $\boldsymbol{H}$.

Since axons usually maintain a constant radius, we next consider the Maxwell's equations in cylindrical coordinates and write
\begin{equation}\label{cyl}
\BE=E_r\hat\Br + r^{-1}E_\theta\hat\thetabf +E_z\hat\Bz,\quad
\BH=H_r\hat\Br + r^{-1}H_\theta\hat\thetabf +H_z\hat\Bz,
\end{equation}
where
$\hat\Br$, $\hat\thetabf$, $\hat\Bz$ are the unit vectors in the positive $r$-, $\theta$-, and $z$-directions, respectively. Then for the TM mode, \eqref{eq:2nd:E} is reduced to
\begin{equation}\label{eq:TM-E}
r\frac{\partial}{\partial r}\bigg(\frac{1}{r}\frac{\partial {E_\theta}}{\partial r}\bigg)
+\frac{\partial^2E_\theta}{\partial z^2}
+ (k^2 + i\omega\mu\sigma)E_\theta = 0.
\end{equation}
Similarly, for the TE mode, we can derive from \eqref{eq:2nd:H} that
\begin{equation}\label{eq:TE-H}
r\frac{\partial}{\partial r}\bigg(\frac{1}{(k^2+\Vi\omega\mu\sigma)r}
\frac{\partial H_\theta}{\partial r}\bigg)
+\frac{\partial}{\partial z}\bigg(\frac{1}{k^2+\Vi\omega\mu\sigma}
\frac{\partial H_\theta}{\partial z}\bigg)
+ H_\theta = 0.
\end{equation}
Denote $\Omega = (0, +\infty)\times(0, Z)$. Then in $D^c$ where $\sigma=0$, both (\ref{eq:TM-E}) and (\ref{eq:TE-H}) take a simplified form as follows:
\begin{equation}\label{eq:u}
r\frac{\partial}{\partial r}\bigg(\frac{1}{r}\frac{\partial u}{\partial r}\bigg)
+\frac{\partial^2u}{\partial z^2} + k^2 u = 0, \quad u=E_\theta\;\;\mbox{or}\;\; H_\theta.
\end{equation}

\subsection{Boundary and radiation conditions}\label{subsec:bc}

To complete the modeling of signal propagation in axon, we impose the boundary conditions
\begin{align}\label{eq:z-bc}
\begin{cases}
u(r,0) = u_0(r) & \mbox{on}\quad\Gamma_{\mathrm{left}}, \cr
\frac{\partial u}{\partial z}(r,Z) = u_N(r) & \mbox{on}\quad\Gamma_{\mathrm{right}}^1:= \Gamma_{\mathrm{right}}\cap\partial D_1, \cr
u(r,Z) = u_1(r) & \mbox{on}\quad \Gamma_{\mathrm{right}}^c:=\Gamma_{\mathrm{right}}\cap\partial D^c,
\end{cases}
\end{align}
for $u=E_\theta$ or $u=H_\theta$ on $\Gamma_{\mathrm{left}}$ and $\Gamma_{\mathrm{right}}$, and $u_0(r),u_1(r)$ are compactly supported, i.e., there exists some $R>0$ such that for $r>R$, $u_0(r)=u_1(r)=0$. On the other hand, since both $\BE$ and $\BH$ are bounded at $r=0$, \eqref{cyl} indicates that on $\Gamma_0$,
\begin{equation}\label{EH-bc}
\lim_{r\rightarrow 0} r^{-1}u=0\quad \forall\, z\in [0,Z].
\end{equation}
Moreover, it is necessary to force appropriate radiation condition for $u$ as $r\to\infty$. The commonly used Silver-M\"uller radiation conditions for the electromagnetic fields $\boldsymbol{E},\boldsymbol{H}$ are given by
\begin{equation*}\label{eq:silver-muller}
\lim_{|\boldsymbol{x}|\to\infty}|\boldsymbol{x}|\SN{\curl{\boldsymbol{E}}\times \hat{\boldsymbol{x}}
- \Vi k\boldsymbol{E}} = 0,\quad \lim_{|\boldsymbol{x}|\to\infty}|\boldsymbol{x}|\SN{\curl{\boldsymbol{H}}\times \hat{\boldsymbol{x}} - \Vi k\boldsymbol{H}} = 0.
\end{equation*}
Then we assume that $u=E_\theta$ or $H_\theta$ admits the radiation condition as
\begin{equation}\label{eq:rbc}
\lim_{r\to\infty}\SN{\frac{\partial u}{\partial r} - iku} = 0.
\end{equation}

\section{The well-posedness analysis}
\label{sec:variational}

This section devotes to studying the well-posedness of the electromagnetic model derived in Section~\ref{sec:model} through the variational approach. For simplicity, we only consider the problem of TM mode which consists of the equation \eqref{eq:TM-E}, the boundary conditions (\ref{eq:z-bc})-(\ref{EH-bc}) and the radiation condition (\ref{eq:rbc}). The results for the problem of TE mode can be carried out analogously and thus is omitted here. For the convenience of the following investigations, we denote by $\Omega_r = \{(\xi,\eta): 0<\xi<r,\; 0<\eta <Z\}$ a truncated domain and denote the boundary $\Gamma_r = \{(r,z): 0< z< Z\}, \Sigma_r =\partial\Omega_r\backslash(\Gamma_r\cup\Gamma_0\cup \Gamma_{\mathrm{right}}^1)$. Denote $D_r^c=\Omega_r\backslash\overline{D_1\cup D_2}$. Let $R>0$ be large enough such that $\Supp(u_0)\cup\Supp(u_1)\subset [0,R]$ and $\Supp(\sigma) \subset \Omega_R$.

\subsection{Sobolev spaces on the truncated domain}
\label{subsec:sobolev}

First we introduce some Sobolev spaces and their trace spaces in cylindrical coordinates. We define the space $U_R$ equipped with a weighted $L^2$-norm as
\begin{align*}
U_R = \big\{v\in\Ltwo[\Omega_R]:\N{v}_{U_R}<\infty\big\},\quad \N{v}^2_{U_R} = \int_{\Omega_R} \SN{u}^2r^{-1}\D r\D z .
\end{align*}
The subspaces $V_R$ and $V_{R,0}$ equipped with a weighted  $H^1$-norm are defined as
\begin{align*}
V_R= \big\{v\in\Hone[\Omega_R]:\N{v}_{V_R}<\infty,\; \lim_{r\rightarrow 0} r^{-1}v=0\big\},
\quad
V_{R,0}=\big\{v\in V_R: v|_{\Sigma_R}=0\big\},
\end{align*}
and
\ben
\N{v}_{V_R}^2 = \int_{\Omega_R}\bigg(\Big|\frac{\partial u}{\partial r}\Big|^2
+\Big|\frac{\partial u}{\partial z}\Big|^2\bigg) r^{-1}\D r\D z + k^2\N{v}^2_{U_R} .
\een

\begin{lemma}\label{lem:compact}
The injection from $V_R$ into $U_R$ is compact.
\end{lemma}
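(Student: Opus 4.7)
The weight $r^{-1}$ degenerates at $r=0$, so the usual Rellich--Kondrachov argument cannot be applied directly near the axis; this is the only real obstruction in the lemma. My plan is to split $\Omega_R$ into a thin cylindrical strip $S_\delta := (0,\delta)\times(0,Z)$ and its complement, control the $U_R$-mass on $S_\delta$ by a Hardy-type inequality that crucially uses the defining condition $\lim_{r\to 0}r^{-1}v=0$, and run the classical Rellich theorem on the complement, where the weight is uniformly bounded.

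For the near-axis estimate, the condition $\lim_{r\to 0}r^{-1}v(r,z)=0$ forces in particular $v(0,z)=0$, so one may write $v(r,z)=\int_0^r\partial_s v(s,z)\,ds$. Cauchy--Schwarz then gives $|v(r,z)|^2\le r\int_0^r|\partial_s v(s,z)|^2\,ds$. Dividing by $r$, integrating in $r$ over $(0,\delta)$, exchanging the order of integration, and finally estimating $\int_0^\delta|\partial_s v|^2\,ds = \int_0^\delta|\partial_s v|^2 s^{-1}\cdot s\,ds \le \delta\int_0^\delta|\partial_s v|^2 s^{-1}\,ds$, I obtain
\[
\int_{S_\delta}|v|^2 r^{-1}\,\D r\,\D z \;\le\; \delta^2\int_{S_\delta}|\partial_r v|^2 r^{-1}\,\D r\,\D z \;\le\; \delta^2\,\|v\|_{V_R}^2.
\]
Thus the $U_R$-mass carried by $v$ inside $S_\delta$ is at most $\delta\|v\|_{V_R}$, uniformly in $v$. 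On the complementary rectangle $(\delta,R)\times(0,Z)$ the weight $r^{-1}$ is pinched between $R^{-1}$ and $\delta^{-1}$, so $\|\cdot\|_{V_R}$ and $\|\cdot\|_{U_R}$ are equivalent (with constants depending on $\delta$) to the standard $H^1$ and $L^2$ norms on this Lipschitz domain, and classical Rellich--Kondrachov supplies a compact embedding.

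Given any sequence $\{v_n\}$ bounded in $V_R$ by some constant $C$, I will apply the Rellich step with $\delta=1/k$ for each $k\in\bbN$ and diagonalize to produce a subsequence $\{v_{n_j}\}$ that converges in $U_R$ on every $(1/k,R)\times(0,Z)$. For any $\varepsilon>0$, fix $\delta$ small enough that $2C\delta<\varepsilon/2$; the Hardy-type bound yields $\|v_{n_j}-v_{n_l}\|_{U_R,S_\delta}\le 2C\delta$, while the diagonal convergence makes the contribution from the complement smaller than $\varepsilon/2$ for $j,l$ sufficiently large. Hence $\{v_{n_j}\}$ is Cauchy in $U_R$, and the embedding is compact. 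The main subtlety I expect is justifying the pointwise identity $v(r,z)=\int_0^r\partial_s v(s,z)\,\D s$ for a general $v\in V_R$: I plan to handle this by approximating $v$ in the $V_R$-norm by smooth functions vanishing in a neighbourhood of $\{r=0\}$ (the natural density result for this weighted space), applying the identity to the approximants, and passing to the limit in the Hardy estimate, where only $\|v\|_{V_R}$ appears on the right-hand side.
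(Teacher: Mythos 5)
Your proof is correct, but it takes a genuinely different route from the paper's. The paper does not decompose the domain: it first extracts a subsequence convergent in the \emph{unweighted} $L^2(\Omega_R)$ (using that the $V_R$-norm dominates the $H^1(\Omega_R)$-norm since $r^{-1}\ge R^{-1}$, so classical Rellich applies globally), and then controls the weighted norm of the differences $e=v_{n_i}-v_{n_j}$ by the single interpolation-type inequality
\[
\int_{\Omega_R}\frac{|e|^2}{r}\,\mathrm{d}r\,\mathrm{d}z\;\le\;R^{1/2}\,\|e\|_{L^2(\Omega_R)}\,\|e\|_{V_R},
\]
obtained by writing $e=\int_0^r\partial_t e\,\mathrm{d}t$ in one factor of $|e|^2$ and applying Cauchy--Schwarz twice --- essentially the same one-dimensional computation as your Hardy bound on the strip, but used globally, with the unweighted $L^2$-norm playing the role of the small factor instead of the strip width $\delta$. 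This makes the paper's argument shorter: no splitting, no diagonalization, and the $\varepsilon/2$ bookkeeping is replaced by ``bounded times vanishing.'' Your version buys some robustness (all the compactness is harvested away from the axis, so it would survive in settings where no compact embedding into an unweighted $L^2$ is available) and it isolates more explicitly where the degeneracy of the weight is absorbed. Both arguments rest on the same technical point, namely that the condition $\lim_{r\to 0}r^{-1}v=0$ licenses the identity $v(r,z)=\int_0^r\partial_s v(s,z)\,\mathrm{d}s$ for a.e.\ $z$; you flag this and propose a density argument, whereas the paper simply asserts that $e_{ij}$ vanishes on $\Gamma_0$, so you are if anything more careful on that step.
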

\begin{proof}
Let $\{v_n\}_{n=1}^\infty\subset V_R$ be a bounded sequence. The compact injection $V_R\hookrightarrow\hookrightarrow\Ltwo[\Omega_R]$ implies that there exists a subsequence $\{v_{n_k}\}$ which is convergent in $\Ltwo[\Omega_R]$ and thus, $\NLtwo[\Omega_R]{e_{ij}}\to 0$ as $i,j\to \infty$ where $e_{ij}:=v_{n_i}-v_{n_j}$. Since $e_{ij}$ vanishes on $\Gamma_0$ for all $i$ and $j$, we deduce that
\begin{align*}
\int_{\Omega_R}\frac{1}{r}\SN{e_{ij}(r,z)}^2\D r\D z
=\,&\int_{\Omega_R}\frac{1}{r}\bar{e}_{ij}(r,z)
\bigg[\int_0^r\frac{\partial e_{ij}}{\partial t}(t,z)\D t\bigg]\D r\D z \\
\le\,& \NLtwo[\Omega_R]{e_{ij}}
\bigg[\int_{\Omega_R} \frac{1}{r^2}\Big|\int_0^r
\frac{\partial e_{ij}}{\partial t}(t,z)\D t\Big|^2
\D r\D z\bigg]^{1/2} \\
\le\,&   \NLtwo[\Omega_R]{e_{ij}}  \bigg[\int_{\Omega_R} \frac{1}{r}\int_0^r \Big|
\frac{\partial e_{ij}}{\partial t}(t,z)\Big|^2\D t
\D r\D z\bigg]^{1/2} \\
\le\,&   \NLtwo[\Omega_R]{e_{ij}}  \bigg[\int_{\Omega_R} \int_0^R \frac{1}{t}\Big|
\frac{\partial e_{ij}}{\partial t}(t,z)\Big|^2\D t
\D r\D z\bigg]^{1/2} \\
=\,& R^{1/2}\NLtwo[\Omega_R]{e_{ij}}\N{e_{ij}}_{V_R}.
\end{align*}
Hence, $\{v_{n_k}\}$ is a Cauchy sequence under the norm $\N{\cdot}_{U_R}$. It is easy to see that $U_R$ is a Banach space. Therefore, the subsequence $\{v_{n_k}\}$ is convergent in $U_R$ which means that $V_R$ is compactly embedded into $U_R$.
\end{proof}

Now we study the Sobolev spaces on $\Gamma_R$.
For any $v\in\Ltwo[\Gamma_R]$, we can write it into a Fourier series
$v=\sum_{m=0}^\infty \big[v_{1,m}\cos(m\pi z/Z) + v_{2,m}\sin(m\pi z/Z)\big]$.
An equivalent norm on $\Ltwo[\Gamma_R]$ is defined by the Parseval identity
\ben
\N{v}_{L^2(\Gamma_R)}^2= \frac{Z}{2}\sum_{m=0}^\infty\big(\SN{v_{1,m}}^2 +\SN{v_{2,m}}^2\big).
\een
Similarly, the norm on $H^{1/2}(\Gamma_R)$ is defined as
\ben
\N{v}_{H^{1/2}(\Gamma_R)}^2= \sum_{m=0}^\infty m\big(\SN{v_{1,m}}^2 +\SN{v_{2,m}}^2\big).
\een
Let $H^{1/2}_0(\Gamma_R)$ denote the closure of $C^\infty_0(\Gamma_R)$ in $H^{1/2}(\Gamma_R)$, namely,
\begin{equation}\label{norm-half}
H^{1/2}_0(\Gamma_R) := \ol{C^\infty_0(\Gamma_R)}^{\N{\cdot}_{H^{1/2}(\Gamma_R)}}.
\end{equation}
It is easy to see that $H^{1/2}_0(\Gamma_R)$ is the trace space of $V_{R,0}$ on $\Gamma_R$.
The dual space of $H^{1/2}_0(\Gamma_R)$ is denoted by $H^{-1/2}(\Gamma_R):=\big[H_0^{1/2}(\Gamma_R)]'$. Any $w\in H^{1/2}_0(\Gamma_R)$ admits a Fourier expansion $w=\sum_{m=1}^\infty w_m\sin(m\pi z/Z)$. The equivalent norms on $H^{1/2}_0(\Gamma_R)$ and $H^{-1/2}(\Gamma_R)$ are, respectively, defined as
\ben
\N{w}_{H^{1/2}_0(\Gamma_R)}^2= \sum_{m=1}^\infty m\SN{w_m}^2,\quad
\N{w}_{H^{-1/2}(\Gamma_R)}^2 = \sum_{m=1}^\infty m^{-1}\SN{w_m}^2.
\een

\subsection{Dirichlet-to-Neumann (DtN) map and truncated problem}
\label{subsec:dtn}

Since $u(r,0)=u(r,Z) =0$ for $r\ge R$, we can write $u$ into a Fourier series $u(r,z)=\sum\limits_{m=1}^{\infty}u_m(r)\sin(m\pi z/Z)$ for $r\ge R, 0\le z\le Z$. Substituting the series into
\eqref{eq:u}, we get an ordinary differential equation
\begin{equation}\label{eq:am}
u_m''(r)-r^{-1}u_m'(r) + k^2_m u_m(r) = 0,
\quad k_m =
\begin{cases}
\sqrt{k^2-m^2\pi^2/Z^2} & \hbox{if}\;\; k> m\pi/Z,
    \vspace{1mm} \\
\Vi\sqrt{m^2\pi^2/Z^2-k^2} & \hbox{otherwise}.
\end{cases}
\end{equation}
To eliminate resonance mode, we assume that $k\ne m\pi/Z$ for all $m\ge 1$.
Using \eqref{eq:rbc}, the outgoing solution of (\ref{eq:am}) is given by
\begin{equation}\label{eq:Hm}
u_m(r)= a_mrH^{(1)}_1(k_mr),
\end{equation}
where $H^{(1)}_1$ is the first order Hankel function of the first kind.
It is easy to see that
\begin{equation}\label{dru}
\frac{\partial}{\partial r}(r^{-1}u) = r^{-2}
\sum_{m=1}^{\infty} h(k_mr) u_m(r)\sin(m\pi z/Z),
\quad
h(t):= t H^{(1)\,\prime}_1(t)/H^{(1)}_1(t).
\end{equation}

\begin{remark}
Since $\big|H^{(1)}_1(t)\big|\sim t^{-1/2}$ as $t\to+\infty$, equation \eqref{eq:Hm} implies that $\SN{u_m(r)}$ grows at the rate $r^{1/2}$ as $r\to +\infty$. In view of \eqref{cyl}, it is $r^{-1}u$ that represents the angular component of $\BE$ or $\BH$. Therefore, the growing rate of $u_m(r)$ is reasonable.
\end{remark}

For any $\eta\in H_0^{1/2}(\Gamma_R)$ which admits a Fourier series $\eta=\sum_{m=1}^{\infty} \eta_m \sin(m\pi z/Z)$, we define a Dirichlet-to-Neumann (DtN) map ${T}:H^{1/2}_0(\Gamma_R)\to H^{-1/2}(\Gamma_R)$ by
\ben
T\eta=\frac{\partial}{\partial r}(r^{-1}v)\big|_{\Gamma_R},
\een
where $v$ is the solution of \eqref{eq:u} in $\Omega_R^c=\Omega^c\backslash\overline{\Omega_R}$ satisfying the Dirichlet boundary conditions $v(R,z)=\eta$ for $0\le z\le Z$ and $v(r,0)=v(r,Z)=0$ for $r\ge R$, and the radiation condition (\ref{eq:rbc}). Using (\ref{dru}), the DtN map can be expressed as
\begin{equation}\label{DtN}
{T}\eta =\sum_{m=1}^{\infty} h(k_mR) \eta_m \sin(m\pi z/Z).
\end{equation}
Utilizing the continuity of the fields crossing the boundary $\Gamma_R$, we can reformulate the model of TM mode on the truncated domain
\begin{subequations}\label{eq:modelR}
\begin{align}
&r\frac{\partial}{\partial r}\bigg(\frac{1}{r}\frac{\partial u}{\partial r}\bigg)
+\frac{\partial^2u}{\partial z^2}
+ (k^2 + i\omega\mu\sigma) u = 0\quad\hbox{in}\;\;\Omega_R,\label{eq:eqn-u} \\
& \lim_{r\rightarrow 0} r^{-1}u=0 \quad\hbox{on}\;\;\Gamma_0,\label{eq:bc-u-hR1} \\
& u=u_D \quad\hbox{on}\;\;\Sigma_R,\label{eq:bc-u-hR2} \\
& \frac{\partial u}{\partial z}=u_N\quad\hbox{on}\;\;\Gamma_{\mathrm{right}}^1,\label{eq:bc-u-Neu} \\
& \frac{\partial}{\partial r}(r^{-1}u) = R^{-2}{T}u
    \quad\hbox{on}\;\;\Gamma_R,\label{eq:bc-u-TR}
\end{align}
\end{subequations}
where $u_D\in H^{1/2}(\Sigma_R)$ is defined as
\ben
u_D=u_0\;\; \hbox{on} \;\; \Gamma_{\mathrm{left}},\quad
u_D=u_1\;\; \hbox{on} \;\; \Gamma_{\mathrm{right}}^c.
\een

Multiplying both sides of \eqref{eq:eqn-u} with $v/r$ where $v\in V_{R,0}$, integrating the result on $\Omega_R$ and taking integration by part, we can obtain the corresponding variational problem of \eqref{eq:modelR} as follows: find $u\in V_R$ such that $u=u_D$ on $\Sigma_R$ and
\begin{align}\label{weak}
a(u,v) =\int_{\Gamma_{\mathrm{right}}^1} u_N\bar{v}\D r \quad \forall\, v\in  V_{R,0},
\end{align}
where $a$ is a sesquilinear form on $V_R\times V_R$ defined by
\begin{align}\label{eq:a}
a(u,v) = \int_{\Omega_R}\bigg[\frac{\partial u}{\partial r}
\frac{\partial\bar{v}}{\partial r}
+\frac{\partial u}{\partial z}\frac{\partial \bar{v}}{\partial z}
- (k^2 + \Vi\omega\mu\sigma) u \bar{v}\bigg]\frac{1}{r}\, \D r\D z
-\frac{1}{R^2}\langle Tu + u, v\rangle_{\Gamma_R}.
\end{align}
Here $\displaystyle \langle \xi, v\rangle_{\Gamma_R}:=\int_{\Gamma_R}\xi\bar v \D z$ stands for the duality product if $\xi\in H^{-1/2}(\Gamma_R)$ and $v\in H_0^{1/2}(\Gamma_R)$ or the inner product if $\xi,v\in L^2(\Gamma_R)$.

\subsection{The well-posedness of problem (\ref{weak})}
\label{subsec:infsup}

In this subsection, we shall prove the inf-sup condition for the sesquilinear form $a$ and establish the well-posedness of problem \eqref{weak}. For any $v,w\in H^{1/2}_0(\Gamma_R)$, we always write $v = \sum_{m=1}^{\infty}v_m \sin(m\pi z/Z)$ and $w = \sum_{m=1}^{\infty}w_m \sin(m\pi z/Z)$.
First we prove some useful results of the DtN map.

\begin{lemma}\label{lem:ImT}
$\Im \langle{T}v, v\rangle_{\Gamma_R}\ge  0$ for any $v\in H_0^{1/2}(\Gamma_R)$ and
$\Im \langle{T}v, v\rangle_{\Gamma_R} =  0$ implies $v\in \Span{\sin(m\pi z/Z): m > kZ/\pi}$.
\end{lemma}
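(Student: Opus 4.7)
The plan is to reduce the claim to a modewise statement via the Fourier-series definition of $T$ and then analyze the symbol $h(k_m R)$ case by case according to whether the mode is propagating ($m<kZ/\pi$) or evanescent ($m>kZ/\pi$); recall the assumption $k\ne m\pi/Z$ for every $m\ge 1$ rules out resonance.

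First I would plug $v=\sum_{m\ge 1}v_m\sin(m\pi z/Z)$ into \eqref{DtN}, compute the duality pairing against $v$ using orthogonality of the sines on $[0,Z]$, and record the identity
\begin{equation*}
\langle Tv,v\rangle_{\Gamma_R}=\frac{Z}{2}\sum_{m=1}^{\infty}h(k_mR)\,|v_m|^2,
\qquad
\Im\langle Tv,v\rangle_{\Gamma_R}=\frac{Z}{2}\sum_{m=1}^{\infty}\Im h(k_mR)\,|v_m|^2.
\end{equation*}
Thus the lemma will follow once I show $\Im h(k_mR)\ge 0$ for every $m$, with strict positivity exactly when $k_m$ is real (i.e. $m<kZ/\pi$).

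For the propagating modes $k_m>0$ the plan is to exploit that $\overline{H^{(1)}_1(t)}=H^{(2)}_1(t)$ for real $t>0$, so
\begin{equation*}
\Im h(t)=\frac{t}{|H^{(1)}_1(t)|^2}\,\Im\!\big[H^{(1)\prime}_1(t)\,H^{(2)}_1(t)\big]
=\frac{t}{2\Vi|H^{(1)}_1(t)|^2}\bigl[H^{(1)\prime}_1(t)H^{(2)}_1(t)-H^{(1)}_1(t)H^{(2)\prime}_1(t)\bigr].
\end{equation*}
Invoking the classical Wronskian identity $H^{(1)}_1(t)H^{(2)\prime}_1(t)-H^{(1)\prime}_1(t)H^{(2)}_1(t)=-4\Vi/(\pi t)$ collapses this to $\Im h(k_mR)=2/(\pi|H^{(1)}_1(k_mR)|^2)>0$. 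For the evanescent modes I would write $k_m=\Vi\tau_m$ with $\tau_m=\sqrt{m^2\pi^2/Z^2-k^2}>0$ and use the standard link $H^{(1)}_1(\Vi s)=-\tfrac{2}{\pi}K_1(s)$ to obtain
\begin{equation*}
h(\Vi\tau_mR)=\tau_mR\,\frac{K_1'(\tau_mR)}{K_1(\tau_mR)}\in\reel,
\end{equation*}
since $K_1$ is real-valued on $(0,\infty)$; hence $\Im h(k_mR)=0$ for every evanescent $m$.

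Combining the two cases gives $\Im\langle Tv,v\rangle_{\Gamma_R}\ge 0$. Moreover, equality forces $v_m=0$ whenever $\Im h(k_mR)>0$, i.e. whenever $m<kZ/\pi$ (the boundary case $m=kZ/\pi$ being excluded by hypothesis), so $v\in\operatorname{Span}\{\sin(m\pi z/Z):m>kZ/\pi\}$, as claimed. The only delicate step is the Wronskian computation for the propagating modes; the evanescent case and the assembly via Parseval are routine once the right representation of the Hankel function is in hand.
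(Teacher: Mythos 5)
Your proof is correct and follows essentially the same route as the paper: reduce to the modewise symbol $h(k_mR)$ via Parseval, show $\Im h(k_mR)=\tfrac{2}{\pi}\bigl|H^{(1)}_1(k_mR)\bigr|^{-2}>0$ for propagating modes, and use the realness of $K_1$ for evanescent modes. The only (cosmetic) difference is that you obtain the positive imaginary part from the Wronskian of $H^{(1)}_1,H^{(2)}_1$, whereas the paper uses the equivalent cross-product identity $J_{l}Y_{l-1}-J_{l-1}Y_{l}=2/(\pi t)$ together with the recurrence $tH^{(1)\,\prime}_1(t)=tH^{(1)}_0(t)-H^{(1)}_1(t)$.
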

\begin{proof}
It follows from \eqref{DtN} that $Im \langle{T}v, v\rangle_{\Gamma_R}
= \frac{Z}{2}\sum_{m=1}^{\infty} \Im [h(k_mR)]\SN{v_m}^2$ for any $v\in H^{1/2}_0(\Gamma_R)$. For $k> m\pi/Z$, we have $k_m>0$. Recalling that for $t>0, l\in\mathbb{Z}$, $H^{(1)}_{l}(t) =J_{l}(t) + \Vi Y_{l}(t)$ and
$J_{l}(t)Y_{l-1}(t)-J_{l-1}(t)Y_{l}(t)=2/(\pi t)$, we have
\begin{align*}
H^{(1)}_{l-1}(t)\ol{H^{(1)}_{l}(t)}
= J_{l-1}(t)J_{l}(t) + Y_{l-1}(t)Y_{l}(t) +2\Vi /(\pi t)\quad
\forall\,t>0,  l\in\mathbb{Z}.
\end{align*}
Therefore, from the relation $t H^{(1)\,\prime}_1(t)=tH^{(1)}_0(t) - H^{(1)}_1(t)$, we know
\begin{equation}\label{Im-hm}
\Im[h(k_mR)] =\SN{H^{(1)}_1(t)}^{-2}
\Im\Big[t H^{(1)}_0(t)\ol{H^{(1)}_1(t)}\Big]=\frac{2}{\pi}\SN{H^{(1)}_1(k_mR)}^{-2}>0.
\end{equation}
For $k<m\pi/Z$, we recall the modified Bessel functions $K_l$ which satisfy
\begin{equation}\label{Km}
H^{(1)}_l(\Vi t) = \frac{2}{\pi}e^{-(l+1)\pi/2 \Vi} K_l(t),\quad t>0.
\end{equation}
Since $K_l(t)$ are real for all $l$ and $t>0$, from \eqref{dru} and \eqref{Km}, we easily know that  $\Im[h(k_mR)]=0$.
The proof is complete.
\end{proof}

\begin{lemma}\label{lem:Ct-cont}
There exists a constant $C>0$ depending only on $k$, $R$, and $Z$ such that
\ben
\N{Tv}_{H^{-1/2}(\Gamma_R)}\le C\N{v}_{H^{1/2}(\Gamma_R)}
\quad \forall\,v\in H^{1/2}_0(\Gamma_R).
\een
\end{lemma}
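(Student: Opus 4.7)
The plan is to exploit the Fourier-series description of $T$ in \eqref{DtN} and reduce the continuity statement to a pointwise estimate on the symbol $h(k_m R)$. For any $v\in H_0^{1/2}(\Gamma_R)$ with $v=\sum_{m=1}^\infty v_m\sin(m\pi z/Z)$, the equivalent norms introduced in Section~\ref{subsec:sobolev} give
\[
\N{v}^2_{H^{1/2}(\Gamma_R)}=\sum_{m=1}^\infty m\SN{v_m}^2,\qquad
\N{Tv}^2_{H^{-1/2}(\Gamma_R)}=\sum_{m=1}^\infty m^{-1}\SN{h(k_mR)}^2\SN{v_m}^2,
\]
so the lemma will follow at once if I can prove a uniform bound of the form $\SN{h(k_mR)}\le C\,m$ for all $m\ge 1$, with $C$ depending only on $k$, $R$, $Z$.

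To establish this bound I would split the index set according to the dichotomy in \eqref{eq:am}. The set $\{m\ge 1:\,k>m\pi/Z\}$ is finite (by the non-resonance assumption $k\ne m\pi/Z$), so on this set each $\SN{h(k_mR)}$ is a fixed finite number and can be absorbed into $C$. The core of the argument is the evanescent regime $m\pi/Z>k$, where $k_m=\Vi s_m$ with $s_m:=\sqrt{m^2\pi^2/Z^2-k^2}$. Using the identity $tH_1^{(1)\prime}(t)=tH_0^{(1)}(t)-H_1^{(1)}(t)$ already invoked in the proof of Lemma~\ref{lem:ImT} together with \eqref{Km}, I rewrite
\[
h(k_mR)=\frac{k_mR\,H_0^{(1)}(k_mR)}{H_1^{(1)}(k_mR)}-1=-\,s_mR\,\frac{K_0(s_mR)}{K_1(s_mR)}-1.
\]
Since $K_0$ and $K_1$ are positive on $(0,\infty)$ with $K_0(t)/K_1(t)\to 1$ as $t\to\infty$ and $K_0(t)/K_1(t)\to 0$ as $t\to 0^+$, the quotient is bounded on every interval $[s_1R,\infty)$, which yields $\SN{h(k_mR)}\le C(s_mR+1)\le C' m$ with $C'$ depending only on $k$, $R$, $Z$. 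Combining the two regimes produces the desired uniform bound on the symbol.

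The final step is routine: plugging $\SN{h(k_mR)}^2\le C^2 m^2$ into the Fourier expression of $\N{Tv}^2_{H^{-1/2}(\Gamma_R)}$ yields
\[
\N{Tv}^2_{H^{-1/2}(\Gamma_R)}\le C^2\sum_{m=1}^\infty m\SN{v_m}^2=C^2\N{v}^2_{H^{1/2}(\Gamma_R)},
\]
which proves the lemma.

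The only genuine obstacle is justifying the uniform asymptotic control of the ratio $K_0/K_1$ (or equivalently of $h$) for all evanescent indices at once; one must be careful because $s_m$ can be arbitrarily small if some mode is close to the cut-off $k=m\pi/Z$. However, the assumption $k\ne m\pi/Z$ ensures $s_m$ stays uniformly positive as $m$ varies, and the continuity of $K_0/K_1$ on $(0,\infty)$ handles the remaining, finite number of moderate values of $s_mR$.
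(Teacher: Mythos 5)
Your proof is correct and follows essentially the same route as the paper's: both reduce the lemma to the symbol bound $|h(k_mR)|\le Cm$ and handle the evanescent modes via the representation $-h(k_mR)=1+|k_m|R\,K_0(|k_m|R)/K_1(|k_m|R)$ together with the boundedness of $K_0/K_1$ on $(0,\infty)$. The only cosmetic differences are that for the finitely many propagating modes the paper invokes an explicit bound on $\Re h(k_mR)$ from the literature instead of simply observing that a finite set of finite values is bounded, and that the paper concludes through the duality pairing rather than the Fourier form of the $H^{-1/2}(\Gamma_R)$ norm.
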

\begin{proof}
Note that for $1\le m<kZ/\pi$, $k_m>0$. We know from \cite[(3.17)]{mel10} that
\begin{align}\label{Re-hm}
\frac{4k_m^2R^2}{4k_m^2R^2+3}
\le -\Re h(k_mR) \le \frac{1}{2} +\frac{9}{16k_m^2R^2}.
\end{align}
Then \eqref{Im-hm} and \eqref{Re-hm} yield that there exists a constant $C>0$ depending only on $k$ such that$\SN{h(k_mR)}\le C$. For $m>kZ/\pi$, from \eqref{Km}, the positivity of $K_m(t)$ for all $m\ge 0,t>0$ and \cite[eq. (10.37.1)]{olv10}, we have
\ben
-h(k_mR) =1+ |k_m| R \frac{K_0(|k_m|R)}{K_1(|k_m|R)}
\le 1+ |k_m| R\le 1+ m\pi R/Z.
\een
For any $v,w\in H^{1/2}_0(\Gamma_R)$, we have
\ben
\SN{\langle Tv,w\rangle_{\Gamma_R}} = \frac{Z}{2}\sum_{m=1}^{\infty} \SN{h(k_mR)v_m\ol{w_m}}
&&\le C \sum_{m=1}^{\infty}m\SN{v_m\ol{w_m}}\\
&&\le C\N{v}_{H^{1/2}(\Gamma_R)}\N{w}_{H^{1/2}(\Gamma_R)}.
\een
This leads to
\ben
\N{Tv}_{H^{-1/2}(\Gamma_R)} = \sup_{w\in H^{1/2}_0(\Gamma_R)}
\frac{\SN{\langle Tv,w\rangle_{\Gamma_R}}}{\N{w}_{H^{1/2}(\Gamma_R)}}
\le C\N{v}_{H^{1/2}(\Gamma_R)}.
\een
The proof is finished.
\end{proof}

The proof of Lemma~\ref{lem:Ct-cont} also indicates the following useful result.
\begin{lemma}\label{lem:positive}
It holds that $-\Re \langle{T}v, v\rangle_{\Gamma_R}\ge 0$ for all $v\in H^{1/2}_0(\Gamma_R)$.
\end{lemma}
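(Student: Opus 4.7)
The plan is to reduce the claim to a termwise estimate on the Fourier coefficients of $Tv$, using the spectral representation \eqref{DtN}, and then show that each coefficient $h(k_mR)$ has nonpositive real part. For any $v = \sum_{m=1}^\infty v_m \sin(m\pi z/Z) \in H^{1/2}_0(\Gamma_R)$, the definition of $T$ and Parseval give
\begin{equation*}
\langle Tv, v\rangle_{\Gamma_R} = \frac{Z}{2}\sum_{m=1}^\infty h(k_mR)\,|v_m|^2,
\end{equation*}
so it suffices to prove $\Re\, h(k_mR) \le 0$ for every $m\ge 1$ (with the exceptional resonant values $k = m\pi/Z$ already excluded by assumption).

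I would split the argument according to the sign of $k^2 - m^2\pi^2/Z^2$, exactly as in the proof of Lemma~\ref{lem:Ct-cont}. For the finitely many \emph{propagating} modes $1 \le m < kZ/\pi$, the wavenumber $k_m$ is real and positive, and the lower bound in \eqref{Re-hm} (taken from \cite{mel10}) immediately yields
\begin{equation*}
-\Re\, h(k_mR) \ge \frac{4k_m^2 R^2}{4k_m^2 R^2 + 3} \ge 0.
\end{equation*}
For the \emph{evanescent} modes $m > kZ/\pi$, I would use the identity \eqref{Km} to rewrite $h(k_mR)$ in terms of the modified Bessel functions $K_0, K_1$, obtaining
\begin{equation*}
-h(k_mR) = 1 + |k_m|R\,\frac{K_0(|k_m|R)}{K_1(|k_m|R)},
\end{equation*}
which is manifestly real and positive since $K_0, K_1 > 0$ on $(0,\infty)$. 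In particular $-\Re\, h(k_mR) > 0$ in this regime as well.

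Combining the two cases gives $\Re\, h(k_mR) \le 0$ for all $m$, so
\begin{equation*}
-\Re \langle Tv, v\rangle_{\Gamma_R}
= \frac{Z}{2}\sum_{m=1}^\infty \bigl(-\Re\, h(k_mR)\bigr)\,|v_m|^2 \ge 0.
\end{equation*}
There is no real obstacle here: the work was essentially done inside the proof of Lemma~\ref{lem:Ct-cont}, and this lemma just reads off the sign information that was already available. The only thing worth flagging is the need to invoke \eqref{Re-hm} for the propagating modes, since the positivity of $-\Re\, h$ in that range is not obvious from the Hankel-function formula alone.
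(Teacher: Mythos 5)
Your proof is correct and is exactly the argument the paper intends: the paper states Lemma~\ref{lem:positive} as an immediate byproduct of the computations in the proof of Lemma~\ref{lem:Ct-cont}, namely the lower bound in \eqref{Re-hm} for the propagating modes and the modified-Bessel identity $-h(k_mR)=1+|k_m|R\,K_0(|k_m|R)/K_1(|k_m|R)>0$ for the evanescent modes. Combining these termwise via the spectral representation \eqref{DtN}, as you do, is precisely the intended reading.
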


The uniqueness of the variational problem \eqref{weak} is given in the following lemma.

\begin{lemma}\label{lem:unique}
The variational problem \eqref{weak} has at most one solution.
\end{lemma}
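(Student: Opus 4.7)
The plan is a standard energy-plus-unique-continuation argument. Given two solutions of \eqref{weak}, set $u=u_1-u_2\in V_{R,0}$; then $a(u,v)=0$ for every $v\in V_{R,0}$ and the aim is to show $u\equiv 0$ in $\Omega_R$.

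First I would test with $v=u$ in the identity $a(u,u)=0$ and extract the imaginary part. The term $-R^{-2}\langle u,u\rangle_{\Gamma_R}=-R^{-2}\|u\|^2_{L^2(\Gamma_R)}$ and the Dirichlet integrals $\int_{\Omega_R}(|\partial_r u|^2+|\partial_z u|^2)r^{-1}$ are real, as is $\int_{\Omega_R}k^2|u|^2 r^{-1}$, so only two contributions survive in the imaginary part:
\[
-\omega\mu\int_{\Omega_R}\sigma\,|u|^{2}\,r^{-1}\,\D r\,\D z \;-\; R^{-2}\,\Im\langle Tu,u\rangle_{\Gamma_R} \;=\;0 .
\]
By Lemma~\ref{lem:ImT} we have $\Im\langle Tu,u\rangle_{\Gamma_R}\ge 0$, while the modeling assumption says that $\sigma\ge 0$ is piecewise constant with $\sigma>0$ on the axon $D_1$ (and $\sigma\equiv 0$ on $D_2\cup D^c$). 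Both terms are therefore non-positive, so each must vanish separately. The first gives $u\equiv 0$ a.e.\ in $D_1$, and the second (again via Lemma~\ref{lem:ImT}) tells us that the trace $u|_{\Gamma_R}$ is supported on the evanescent modes $\{\sin(m\pi z/Z):m>kZ/\pi\}$.

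Next I would propagate the vanishing of $u$ from $D_1$ to the rest of $\Omega_R$ by unique continuation. Inside each subdomain $D_1,\,D_2,\,D^c\cap\Omega_R$ the coefficient $k^{2}+i\omega\mu\sigma$ is constant, the principal part $r\,\partial_r(r^{-1}\partial_r\cdot)+\partial_z^2$ is uniformly elliptic with smooth coefficients away from $r=0$, and the solution is real-analytic in the interior. Because the principal part carries no material-dependent coefficient, the weak formulation together with $u\in V_R\subset H^1(\Omega_R)$ forces continuity of both $u$ and its conormal derivative across the smooth (concentric) interfaces $\partial D_1\cap\partial D_2$ and $\partial D_2\cap\partial(D^c\cap\Omega_R)$. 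Consequently the zero Cauchy data inherited from the $D_1$-side of the axon boundary transfers to the $D_2$-side, and a Holmgren/Aronszajn unique-continuation argument delivers $u\equiv 0$ in $D_2$; a second application across the myelin-fluid interface yields $u\equiv 0$ in $D^c\cap\Omega_R$, and hence $u\equiv 0$ in $\Omega_R$.

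The main obstacle I anticipate is this last unique-continuation step, which hinges on (i) the positivity of $\sigma$ throughout $D_1$ so that the energy test can start the induction, and (ii) enough regularity of the interfaces between subdomains so that zero Cauchy data can be transported across them; both are guaranteed by the physical modelling and the concentric geometry of Figure~\ref{model}(b). The evanescent-mode information obtained from Lemma~\ref{lem:ImT} is a byproduct; it could be used as an alternative route via the Hankel representation to prove $u\equiv 0$ in $\Omega_R^c$ and then pulled back to $\Omega_R$, but it is not required once unique continuation is in place.
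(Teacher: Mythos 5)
Your argument is correct and follows essentially the same route as the paper: testing with $v=u$, taking the imaginary part, using Lemma~\ref{lem:ImT} together with $\sigma>0$ on $D_1$ to get $u\equiv 0$ in $D_1$, and then extending to all of $\Omega_R$ by unique continuation. The only cosmetic difference is that the paper avoids your interface-by-interface Cauchy-data transfer by observing that, once $u\equiv0$ in $D_1$, the equation with $\sigma=0$ holds throughout $\Omega_R$ and satisfies a differential inequality $\SN{\partial_r^2u+\partial_z^2u}\le C(\SN{\partial_r u}+\SN{u})$ with bounded coefficients, to which the unique continuation theorem (Lemma~4.15 of \cite{monk03}) applies directly across the piecewise-constant jumps of $k^2$.
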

\begin{proof}
Since \eqref{weak} is a linear problem, it is suffices to show that $u_D=u_N=0$ implies $u\equiv 0$.
Now we suppose $u\in V_{R,0}$ and take $v=u$ in \eqref{weak}. The imaginary part of the equation shows
\begin{align*}
\omega\mu\int_{\Omega_R} \sigma \SN{u}^2  r^{-1}\D r\D z
+ R^{-2}\Im\langle Tu, u\rangle_{\Gamma_R} =0 .
\end{align*}
From Lemma~\ref{lem:ImT}, we infer that $u\equiv 0$ in $D_1$.
From \eqref{eq:u}, we have
\begin{equation*}
r\frac{\partial}{\partial r}\bigg(\frac{1}{r}\frac{\partial u}{\partial r}\bigg)
+\frac{\partial^2u}{\partial z^2} + k^2 u = 0\quad
\hbox{in}\;\;\Omega_R\backslash\ol{D_1}.
\end{equation*}
Since $u\equiv 0$ in $D_1$, we also have
\begin{equation}\label{eq-OmegaR}
r\frac{\partial}{\partial r}\bigg(\frac{1}{r}\frac{\partial u}{\partial r}\bigg)
+\frac{\partial^2u}{\partial z^2}+ k^2 u = 0\quad
\hbox{in}\;\;\Omega_R.
\end{equation}

Take a $\Bp\in \partial D_{1}$ and an open disk $B_\delta(\Bp)$ with the radius $\delta$ and the the center being $\Bp$. Assume $B_\delta(\Bp)\subset \Omega_R$ without loss of generality. There exists a constant depending on $\delta$ and $k$ such that
\ben
\bigg|\frac{\partial^2u}{\partial r^2}+\frac{\partial^2u}{\partial z^2}\bigg|
\le C\bigg(\bigg|\frac{\partial u}{\partial r}\bigg| + |u|\bigg) \quad
\hbox{a.e. in}\;B_\delta(\Bp).
\een
Note that $u\equiv 0$ in $B_\delta(\Bp)\cap D_1$.
By the unique continuation theory (see Lemma~4.15 in \cite{monk03}, page 93), we have $u\equiv 0$ in
$B_\delta(\Bp)$. Moreover, we can extend the arguments from $B_\delta(\Bp)$ to $\Omega_R$
and end up with $u\equiv 0$ in $\Omega_R$.
\end{proof}

Now, we are ready to show the inf-sup condition for the sesquilinear form $a$ and establish the well-posedness of problem \eqref{weak}.

\begin{theorem}\label{thm:infsup}
There exists a unique solution to the variational problem \eqref{weak}. Moreover, the inf-sup condition
\begin{align*}
\sup_{0\ne v\in V_{R,0}}\frac{\SN{a(w,v)}}{\N{v}_{V_R}} \ge C_{\inf}\N{w}_{V_R},
\quad \forall\,w\in V_{R,0},
\end{align*}
holds where $C_{\inf}>0$ is a constant depending only on $k$, $R$, $Z$ and material parameters.
\end{theorem}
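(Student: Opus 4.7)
The plan is to prove the inf-sup estimate by establishing that the operator $A:V_{R,0}\to V_{R,0}'$ induced by $a(\cdot,\cdot)$ is bijective, and then invoking the open mapping theorem to convert boundedness of $A^{-1}$ into the desired lower bound on $\sup_v|a(w,v)|/\|v\|_{V_R}$. Uniqueness is already supplied by Lemma~\ref{lem:unique}; what remains is existence, which I would obtain by writing $a$ as the sum of a coercive form and a compact perturbation and applying the Fredholm alternative. The non-homogeneous Dirichlet data will be reduced to the homogeneous setting by lifting.

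Concretely, I would split $a = a_+ + a_c$ with
\[
a_+(u,v) := \int_{\Omega_R}\Big(\partial_r u\,\partial_r\bar v + \partial_z u\,\partial_z\bar v + k^2 u\bar v\Big)\frac{1}{r}\D r\D z - \frac{1}{R^2}\langle Tu, v\rangle_{\Gamma_R},
\]
and $a_c$ collecting the remaining contributions, namely $-\int_{\Omega_R}(2k^2+\Vi\omega\mu\sigma)u\bar v\, r^{-1}\D r\D z$ together with $-R^{-2}(u,v)_{L^2(\Gamma_R)}$. Lemma~\ref{lem:positive} gives $-\Re\langle Tv,v\rangle_{\Gamma_R}\ge 0$, and the bulk part of $a_+(v,v)$ is exactly $\|v\|_{V_R}^2$, so $\Re a_+(v,v)\ge\|v\|_{V_R}^2$; continuity of $a_+$ on $V_R\times V_R$ follows from the Cauchy--Schwarz inequality and Lemma~\ref{lem:Ct-cont}. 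Lax--Milgram then provides an isomorphism $A_+:V_{R,0}\to V_{R,0}'$. The compactness of $A_c$ reduces, for the bulk terms, to the compact embedding $V_R\hookrightarrow\hookrightarrow U_R$ of Lemma~\ref{lem:compact}, and, for the boundary term, to compactness of the trace $V_R\to L^2(\Gamma_R)$: since $\Gamma_R$ sits at $r=R>0$, the weight $1/r$ is bounded in a neighborhood of $\Gamma_R$, so $V_R$ is locally equivalent to $H^1$ there and the standard trace/Rellich machinery applies.

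With $A=A_++A_c$ a compact perturbation of an isomorphism, $A$ is Fredholm of index zero. Lemma~\ref{lem:unique} (which applies on $V_{R,0}$ since $u_D=u_N=0$ forces the unknown to lie in $V_{R,0}$) gives $\ker A=\{0\}$, hence $A$ is bijective. The open mapping theorem then delivers a constant $C_{\inf}>0$ with $\|Aw\|_{V_{R,0}'}\ge C_{\inf}\|w\|_{V_R}$ for every $w\in V_{R,0}$, which is exactly the inf-sup statement. To accommodate the boundary data, I would choose a lift $u^\star\in V_R$ with $u^\star|_{\Sigma_R}=u_D$; then $w:=u-u^\star\in V_{R,0}$ satisfies $a(w,v)=\int_{\Gamma_{\mathrm{right}}^1}u_N\bar v\,\D r - a(u^\star,v)$ for all $v\in V_{R,0}$, and bijectivity of $A$ yields existence and uniqueness of $w$, hence of $u$.

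The delicate point I expect is honest bookkeeping in the weighted space $V_R$: coercivity of $a_+$ depends on the correct sign of $\Re\langle Tv,v\rangle_{\Gamma_R}$ supplied by Lemma~\ref{lem:positive}, and the compactness of the $L^2(\Gamma_R)$-coupling term requires verifying that the classical compact-trace theorem survives under the $r^{-1}$ weight (which it does only because $\Gamma_R$ is bounded away from the symmetry axis). Once these weighted-space facts are secured, the remaining pieces are routine Fredholm/Lax--Milgram arguments.
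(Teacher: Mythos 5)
Your proposal is correct and follows essentially the same route as the paper: a splitting of $a$ into a coercive form $a_+$ (coercive via Lemma~\ref{lem:positive}, continuous via Lemma~\ref{lem:Ct-cont}) plus a compact perturbation (compact via Lemma~\ref{lem:compact} and the compactness of the trace into $\Ltwo[\Gamma_R]$), followed by Lax--Milgram, the Fredholm alternative with Lemma~\ref{lem:unique} supplying injectivity, and the bounded-inverse/open-mapping theorem to produce $C_{\inf}$. The only cosmetic differences are that the paper keeps the $\Vi\omega\mu\sigma$ term inside $a_+$ rather than in the compact part, and that you spell out the lifting of the Dirichlet data, which the paper leaves implicit.
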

\begin{proof}
It suffices to prove that, for any $\ell\in V_{R,0}'$, there exists a unique solution $w\in V_{R,0}$ to the problem
\begin{align}\label{eq:ell}
a(w,v) =\ell(v)\quad \forall\,v\in V_{R,0},
\end{align}
and that the solution satisfies $\N{w}_{V_R}\le C\N{\ell}_{V_{R,0}'}$. To do this, we define another sesquilinear form $a_+$ on $V_R\times V_R$ as
\begin{align}\label{eq:ap}
a_+(u,v) = \int_{\Omega_R}\bigg[\frac{\partial u}{\partial r}
\frac{\partial\bar{v}}{\partial r}
+\frac{\partial u}{\partial z}\frac{\partial \bar{v}}{\partial z} +
(k^2 - \Vi\omega\mu\sigma) u \bar{v}\bigg]\frac{1}{r}\D r\D z
-\frac{1}{R^2}\langle Tu, v\rangle_{\Gamma_R}.
\end{align}
By Lemmas~\ref{lem:ImT}-\ref{lem:positive}, there is a generic constant $C>0$ depending only on $k$, $R$, $Z$ and material parameters such that
\begin{align}\label{ap}
|a_+(v,v)|\ge \Re a_+(v,v) \ge \N{v}_{V_R}^2,\quad
\SN{a_+(u,v)} \le C\N{u}_{V_R}\N{v}_{V_R}.
\end{align}

Let $\Ck_0: U_R\to V_{R,0}$ and $\Ck_1: \Ltwo[\Gamma_R]\to V_{R,0}$ be two operators defined as follows: for $w\in U_R$ and $\eta\in \Ltwo[\Gamma_R]$, $\Ck_0w$ and $\Ck_1\eta$ are the unique solutions to the following two problems, respectively,
\begin{align}\label{K0}
a_+(\Ck_0w,v) =2k^2\int_{\Omega_R}w\bar{v}\frac{1}{r}\D r\D z,\quad
a_+(\Ck_1\eta,v) =R^{-2}\langle\eta, v\rangle_{\Gamma_R} \quad
\forall\,v\in V_{R,0}.
\end{align}
Then \eqref{ap} together with Lax-Milgram theorem implies that $\Ck_0$ and $\Ck_1$ are continuous operators, namely,
\ben
\N{\Ck_0w}_{V_R}\le C\N{w}_{U_R}, \quad
\N{\Ck_1\eta}_{V_R}\le C\NLtwo[\Gamma_R]{\eta}.
\een
By Lemma~\ref{lem:compact} and the compact injection $H^{1/2}(\Gamma_R)\hookrightarrow\hookrightarrow\Ltwo[\Gamma_R]$, both $\Ck_0: V_{R,0}\to V_{R,0}$ and $\Ck_1: H_0^{1/2}(\Gamma_R)\to V_{R,0}$ are compact operators.

Let $\gamma_R$ denote the trace operator which maps $V_{R,0}$ onto $H^{1/2}_0(\Gamma_R)$ continuously. We can write problem \eqref{eq:ell} into an equivalent operator equation
\begin{align}\label{eq:opr}
w -(\Ck_0+\Ck_1\gamma_R)w =f,
\end{align}
where $f\in V_{R,0}$ is the unique solution to the problem
\begin{align*}
a_+(f,v) =\ell(v)\quad \forall\,v\in V_{R,0}.
\end{align*}
Since $\Ck_0+\Ck_1\gamma_R: V_{R,0}\to V_{R,0}$ is a compact operator, \eqref{eq:opr} is a Fredholm equation of second kind. By Lemma~\ref{lem:unique} and Fredholm alternative theorem, we conclude that there exists a unique solution to the variational problem (\ref{eq:ell}). Let $\Ci$ denote the identity operator on $V_{R,0}$. The arbitrariness of $\ell\in V_{R,0}'$ shows that $(\Ci-\Ck_0-\Ck_1\gamma_R)^{-1}$ exists and is a continuous operator from $V_{R,0}$ to $V_{R,0}$. We end up with
\ben
\N{w}_{V_R}\le C\N{f}_{V_R}\le C\N{\ell}_{V_{R,0}'}
=C\sup_{0\ne v\in V_{R,0}}\frac{|a(w,v)|}{\N{v}_{V_R}}.
\een
The proof is complete.
\end{proof}

\begin{remark}
In this section, we establish the well-posedness for the derived electromagnetic model by introducing the exact DtN map to truncate the unbounded domain. Incorporating with the finite element method (FEM), which is called DtN-FEM, the variational problem \eqref{weak} can be used for the numerical simulation. Noting that the discretization of the sesquilinear form $\langle Tu,v\rangle_{\Gamma_R}$ is a nonlocal integral, in the rest of this work we utilize an alternative way, the PML method, to truncate the unbounded domain $\Omega^c$ and study the convergence of this method. The analysis and application of DtN-FEM, including the truncation of the infinite series in the DtN map and adaptivity, are left for future works.
\end{remark}

\section{The truncated PML problem and convergence study}
\label{sec:trupml}

Besides the DtN map discussed in Section~\ref{sec:variational}, this section proposes an approximate problem applying the PML truncation strategy. Suppose $\rho\ge 2R$ and let $\Omega_\rho:=\{0<r<\rho, 0<z<Z\}$ be the domain in which the truncated PML problem is formulated. Denote $\Omega_{\pml}:= \Omega_\rho\backslash\bar\Omega_R$ the PML region with
$d=\rho-R$ being the thickness of $\Omega_{\pml}$. To derive the truncated PML problem, we introduce the following complex stretching of radial coordinate. Define
\begin{align}\label{eq:tr}
\tilde{r}= F(r):= r + (1+\Vi)\chi(r),\quad
\chi(r) =
\begin{cases}
0 & \hbox{if}\;\; r\le R,   \\
\chi_0(r-R)^2 & \hbox{if}\;\; r> R,
\end{cases}
\end{align}
where $\chi_0> 0$ is a constant. Clearly, $\chi$ is $C^1$-smooth in $(0,+\infty)$. Denote
\begin{align}\label{Jac}
\alpha(r) = F'(r) = 1 + (1+\Vi)\chi'(r),\quad
\beta(r) = \tilde{r}/r =  1 + (1+\Vi)\chi(r)/r.
\end{align}
We have already assumed that $k\ne m\pi/Z$ for all $m\ge 1$. Without loss of generality, we additionally assume
\begin{align}\label{kappa}
\kappa\chi_0R\ge 1, \quad \kappa:=\min\limits_{m\ge 1}\SN{k_m}.
\end{align}
%The following estimates will be useful for later proofs.
%\begin{lemma}
%
%\end{lemma}

\subsection{The approximate problem}

Note from \eqref{eq:Hm} that, for $r\ge R$, the exact solution to problem \eqref{eq:modelR} can be represented as
\begin{align}\label{exp-u}
u(r,z) =r \sum_{m=1}^\infty a_m H^{(1)}_1(k_mr)\sin(m\pi z/Z) \quad \forall\, r\ge R.
\end{align}
With this explicit representation, we can define the analytic continuation of $u$ from the real variable $r$ to the complex variable $\tilde{r}$ by
\begin{align}\label{exp-ut}
u(\tilde r,z) =\tilde r \sum_{m=1}^\infty a_m H^{(1)}_1(k_m\tilde{r})\sin(m\pi z/Z) \quad \forall\, r\ge R.
\end{align}
Then by the chain rule, it is easy to see that $\tilde u(r,z):= u(\tilde r, z)$ satisfies the modified equation
\begin{align}\label{tu-eqn}
\frac{r\beta}{\alpha}\frac{\partial}{\partial r}
\bigg(\frac{1}{r\alpha\beta}\frac{\partial\tilde{u}}{\partial r}\bigg)
+\frac{\partial^2\tilde{u}}{\partial z^2}
 + (k^2 + i\omega\mu\sigma) \tilde{u} = 0 \quad \hbox{in}\;\;\Omega .
\end{align}
To truncate the unbounded domain $\Omega$, it is reasonable to impose the Dirichlet boundary condition $\hat{u} = 0$ on $\Gamma_\rho$ regarding to the exponential decay of the Hankel functions with a complex argument. Hence, the approximate problem to \eqref{eq:modelR} is proposed as follows
\begin{subequations}\label{pro-pml}
\begin{align}
&\frac{r\beta}{\alpha}\frac{\partial}{\partial r}
\bigg(\frac{1}{r\alpha\beta}\frac{\partial\hat{u}}{\partial r}\bigg)
+\frac{\partial^2\hat{u}}{\partial z^2}
+ (k^2 + i\omega\mu\sigma)\hat{u} = 0 \quad \hbox{in}\;\;\Omega_\rho,\label{eqn-pml} \\
& \lim_{r\rightarrow 0} r^{-1}\hat u=0 \quad\hbox{on}\;\;\Gamma_0, \label{eq:bc-pml1}\\
& \hat{u}=u_D \quad\hbox{on}\;\;\Sigma_\rho,\quad
 \hat{u} = 0    \quad\hbox{on}\;\;\Gamma_\rho,\label{eq:bc-pml2}\\
& \frac{\partial \hat{u}}{\partial z}=u_N\quad\hbox{on}\;\;\Gamma_{\mathrm{right}}^1.\label{eq:bc-pml-Neu}
\end{align}
\end{subequations}
Letting the spaces $V_\rho,V_{\rho,0}$, as well as the equipped norms, be defined analogous to $V_R,V_{R,0}$, respectively, by replacing the domain $\Omega_R$ by $\Omega_\rho$ and denote $V_\rho^0:=\{v\in V_{\rho,0}: v|_{\Gamma_\rho}=0\}$. Then the weak formulation of the problem (\ref{pro-pml}) reads: find $\hat u\in V_\rho$ such that $\hat{u}=u_D$ on $\Sigma_\rho$ and
\begin{align}\label{weak:pro-pml}
a_{\rho}(\hat{u},v) =\int_{\Gamma_{\mathrm{right}}^1} u_N\bar{v}\D r \quad
\forall\,v\in V_\rho^0,
\end{align}
where
\begin{align*}
a_{\rho}(\hat{u},v) := \int_{\Omega_\rho}\frac{1}{r\beta}\bigg(
\frac{1}{\alpha}\frac{\partial \hat{u}}{\partial r}\frac{\partial\bar{v}}{\partial r}
+\alpha \frac{\partial\hat{u}}{\partial z} \frac{\partial\bar{v}}{\partial z}
-\alpha (k^2 + i\omega\mu\sigma)\hat{u}\bar{v}\bigg)\D r\D z.
\end{align*}

The purpose of the remaining parts of this section is to study the well-posedness of the approximate problem \eqref{pro-pml} and its convergence, i.e., the error estimate between the exact solution $u$ of problem (\ref{eq:modelR}) and the approximate solution $\hat{u}$ of problem \eqref{pro-pml}. Since problem (\ref{eq:modelR}) is defined on $\Omega_R$, we next reformulate \eqref{pro-pml} into a problem on $\Omega_R$ utilizing the DtN map strategy. Let $\hat{T}:H^{1/2}_0(\Gamma_R)\to H^{-1/2}(\Gamma_R)$ be a DtN map defined as, for any $\eta\in H^{1/2}_0(\Gamma_R)$,
\begin{align}\label{hT}
\hat{T}\eta := R^2\frac{\partial}{\partial r}(r^{-1}w)\big|_{\Gamma_R},
\end{align}
where $w$ is the solution to the Dirichlet problem in the PML
\begin{subequations}\label{pro-hT}
\begin{align}
&\frac{r\beta}{\alpha}\frac{\partial}{\partial r}
\bigg(\frac{1}{r\alpha\beta}\frac{\partial w}{\partial r}\bigg)
+\frac{\partial^2w}{\partial z^2} + k^2 w = 0
\quad \hbox{in}\;\;\Omega_{\pml}, \label{eqn-hT} \\
&w=\eta \quad \hbox{on}\;\;\Gamma_R, \label{bc0-hT}\\
&w=0 \quad \hbox{on}\;\;\partial\Omega_\pml\backslash\Gamma_R.\label{bc1-hT}
\end{align}
\end{subequations}
The well-posedness of problem \eqref{pro-hT}, which will be addressed in the next subsection, ensures that the DtN operator $\hat{T}$ is well-defined. Then the continuity of the fields crossing the boundary $\Gamma_R$ indicates that we can reformulate the problem \eqref{pro-pml} in $\Omega_R$ as follows:
\begin{subequations}\label{model-hR}
\begin{align}
&r\frac{\partial}{\partial r}
\bigg(\frac{1}{r}\frac{\partial\hat{u}}{\partial r}\bigg)
+\frac{\partial^2\hat{u}}{\partial z^2}
+ (k^2 + i\omega\mu\sigma)\hat{u} = 0 \quad \hbox{in}\;\;\Omega_R,\label{eqn-hR} \\
& \lim_{r\rightarrow 0} r^{-1}\hat u=0 \quad\hbox{on}\;\;\Gamma_0, \label{eq:bc0-hR1}\\
& \hat{u}=u_D \quad\hbox{on}\;\;\Sigma_R, \label{eq:bc0-hR2} \\
& \frac{\partial \hat{u}}{\partial z}=u_N\quad\hbox{on}\;\;\Gamma_{\mathrm{right}}^1,\label{eq:bc0-Neu} \\
& \frac{\partial}{\partial r}\big(r^{-1} \hat{u}\big) =R^{-2}\hat{T}\hat{u}    \quad\hbox{on}\;\;\Gamma_R.\label{eq:bc1-hR}
\end{align}
\end{subequations}
This leads us to study the well-posedness of problem \eqref{model-hR} and the error estimate between the exact solution $u$ of problem (\ref{eq:modelR}) and the approximate solution $\hat{u}$ of problem \eqref{model-hR}.

\subsection{The well-posedness of problem (\ref{pro-hT})}

To establish the well-posedness of problem \eqref{pro-hT}, we use separation of variables and the Fourier expansions of $w$ and $\eta$ formulated as
\begin{align}\label{exp-w}
w(r,z)=\sum_{m=1}^\infty w_m(r)\sin(m\pi z/Z),\quad \eta(z) =\sum_{m=1}^\infty \eta_m\sin(m\pi z/Z),
\end{align}
to reduce the problem \eqref{pro-hT} into a system of ordinary differential equations, for $m\ge 1$,
\begin{subequations}\label{pro-wm}
\begin{align}
&\frac{r\beta}{\alpha}\frac{\partial}{\partial r}
\bigg(\frac{1}{r\alpha\beta}\frac{\partial w_m}{\partial r}\bigg) + k_m^2 w_m = 0
\quad \hbox{for}\;\;R<r<\rho, \label{eqn-wm} \\
&w_m(R)=\eta_m, \quad w_m(\rho)=0.\label{bc-wm}
\end{align}
\end{subequations}
The variational formulation of \eqref{pro-wm} is given as follows: find $w_m\in H^1((R,\rho))$ which satisfies \eqref{bc-wm} and
\begin{align}\label{weak-wm}
\mathscr{A}_m(w_m,v):=\int_R^\rho \frac{1}{r\beta}\big(\alpha^{-1} w_m' \ol{v'}
-\alpha k_m^2 w_m\bar{v}\big) \D r= 0
\quad \forall\,v\in\zbHone[(R,\rho)].
\end{align}
We first prove the well-posedness of the problem \eqref{pro-wm} for each $m\ge 1$.

\begin{lemma}\label{lem:avv}
There exist two positive constants $C_0,C_1$ independent of $m$, $\chi_0$, and $\rho$ such that, for any $v\in\zbHone[(R,\rho)]$,
\begin{align}
&\Re \mathscr{A}_m(v,v)\ge \int_R^\rho \xi\SN{v'}^2\D r - C_0k_m^2\int_R^\rho\eta\SN{v}^2\D r
\quad \hbox{if}\;\; m > kZ/\pi, \label{ieq:avv-1}\\
&\Re \mathscr{A}_m(v,v)-C_1 d^3\SN{\alpha(\rho)}^{2}\Im \mathscr{A}_m(v,v) \nonumber\\
&\ge \frac{1}{2}\int_R^\rho \xi\SN{v'}^2\D r
    +C_0 d^3\SN{\alpha(\rho)}^{2}\int_R^\rho \eta\chi'\SN{v}^2\D r
    \quad \hbox{if}\;\; m < kZ/\pi, \label{ieq:avv-2}
\end{align}
where $\xi =(r+r\chi'+ \chi)/\SN{r\alpha\beta}^2$ and
$\eta =(r+r\chi'+ \chi+ 2\chi\chi')/\SN{r\beta}^2$. As a result, there exists a unique solution to the problem \eqref{pro-wm} for all $m\ge 1$.
\end{lemma}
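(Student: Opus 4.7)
The plan is to establish both coercivity-type bounds by direct computation of the real and imaginary parts of the coefficients appearing in $\mathscr{A}_m(v,v)$, and then to deduce well-posedness via Lax--Milgram applied to a complex rotation of $\mathscr{A}_m$. A short algebraic computation using $\alpha = 1+(1+\Vi)\chi'$ and $\beta = 1+(1+\Vi)\chi/r$ gives
\begin{align*}
\Re\frac{1}{r\alpha\beta} = \xi,\quad
\Re\frac{\alpha}{r\beta} = \eta,\quad
\Im\frac{1}{r\alpha\beta} = -\frac{r\chi'+\chi+2\chi'\chi}{\SN{r\alpha\beta}^2},\quad
\Im\frac{\alpha}{r\beta} = \frac{r\chi'-\chi}{r^2\SN{\beta}^2},
\end{align*}
and the identity $r\chi'-\chi = \chi_0(r-R)(r+R)$ confirms that all four weights are non-negative on $(R,\rho)$.

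For \eqref{ieq:avv-1} we have $k_m=\Vi\SN{k_m}$, hence $k_m^2 = -\SN{k_m}^2$, and taking the real part of $\mathscr{A}_m(v,v)$ gives the stated inequality at once, in fact with equality and $C_0=1$ since both resulting terms are already non-negative. The main technical obstacle is \eqref{ieq:avv-2}, where $k_m^2>0$ and $\Re\mathscr{A}_m(v,v)$ carries the sign-indefinite term $-k_m^2\int_R^\rho\eta\SN{v}^2\D r$. The strategy is to exploit $-\Im\mathscr{A}_m(v,v)\ge 0$, namely
\begin{align*}
-\Im\mathscr{A}_m(v,v) = \int_R^\rho \frac{r\chi'+\chi+2\chi'\chi}{\SN{r\alpha\beta}^2}\SN{v'}^2\D r
+ k_m^2\int_R^\rho \frac{r\chi'-\chi}{r^2\SN{\beta}^2}\SN{v}^2\D r .
\end{align*}
Adding $-C_1 d^3\SN{\alpha(\rho)}^2\Im\mathscr{A}_m(v,v)$ to $\Re\mathscr{A}_m(v,v)$ produces a positive $k_m^2$-contribution in $\SN{v}^2$ that dominates $k_m^2\eta\SN{v}^2$ away from $r=R$; near $r=R$, where $r\chi'-\chi$ vanishes quadratically, I will use the Dirichlet condition $v(R)=0$ through $\SN{v(r)}^2\le (r-R)\int_R^r\SN{v'}^2\D s$ and Fubini to recast the remaining $L^2$-mass as an integral against $\SN{v'}^2$. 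The factor $d^3\SN{\alpha(\rho)}^2$ is tuned precisely so this weight comparison closes: since $\SN{\alpha(\rho)}\sim 2\chi_0 d$ and $r\chi'-\chi\sim R\chi_0(r-R)$, the iterated integral that emerges from Fubini scales like $d^3\SN{\alpha(\rho)}^2\,\xi(s)$. Choosing $C_1$ large and $C_0$ of order one then absorbs the bad term into $\tfrac12\int_R^\rho\xi\SN{v'}^2\D r$, leaving the positive residual $C_0 d^3\SN{\alpha(\rho)}^2\int_R^\rho \eta\chi'\SN{v}^2\D r$.

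For the well-posedness claim, either bound, combined with the standard Poincar\'e inequality on $\zbHone[(R,\rho)]$ (which applies since $v(R)=v(\rho)=0$) and the uniform bound $\xi\ge c_0>0$ on $(R,\rho)$, upgrades to coercivity of the rotated form $\widetilde{\mathscr{A}}_m := (1+\Vi C_1 d^3\SN{\alpha(\rho)}^2)\mathscr{A}_m$ in the standard $H^1((R,\rho))$-norm; continuity of $\widetilde{\mathscr{A}}_m$ is immediate from the uniform boundedness of $\alpha, \alpha^{-1},\beta^{-1}$ on $(R,\rho)$. The Lax--Milgram lemma then yields a unique $w_m\in H^1((R,\rho))$ satisfying \eqref{bc-wm} and \eqref{weak-wm} for every $m\ge 1$, completing the proof. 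The main obstacle, as already indicated, is the delicate weight comparison near $r=R$ in \eqref{ieq:avv-2} that produces the precise scaling $d^3\SN{\alpha(\rho)}^2$; the rest is bookkeeping and a standard variational wrap-up.
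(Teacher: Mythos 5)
Your proposal follows essentially the same route as the paper: the same explicit computation of $\Re$ and $\Im$ of the weights $1/(r\alpha\beta)$ and $\alpha/(r\beta)$ (yielding $\xi,\eta$ and the auxiliary non-negative weights $\xi_1=(r\chi'+\chi+2\chi\chi')/\SN{r\alpha\beta}^2$, $\eta_1=(r\chi'-\chi)/\SN{r\beta}^2$), the immediate conclusion for the evanescent modes $m>kZ/\pi$, and for $m<kZ/\pi$ the same mechanism of absorbing the indefinite term $-k_m^2\int\eta\SN{v}^2$ by combining a multiple of $-\Im\mathscr{A}_m(v,v)$ with a weighted Young/Poincar\'e inequality that exploits $v(R)=0$ and the quadratic vanishing of $\chi$ at $r=R$, followed by a Lax--Milgram argument for well-posedness. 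The only differences are presentational (you phrase the splitting as ``near/away from $r=R$'' where the paper uses a single $\epsilon$-Young inequality, and you make the Lax--Milgram step explicit where the paper leaves it implicit), so the proposal is correct and matches the paper's proof.
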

\begin{proof}
Write $\xi_1= (r\chi'+ \chi+ 2\chi\chi')/\SN{r\alpha\beta}^2$ and
$\eta_1= (r\chi'- \chi)/\SN{r\beta}^2$ for convenience.
It is easy to see
\begin{align}
& \Re \mathscr{A}_m(v,v)= \int_R^\rho
\Big(\xi\SN{v'}^2 -k_m^2\eta\SN{v}^2\Big),\label{ReIm-am1}\\
& \Im \mathscr{A}_m(v,v)= -\int_R^\rho \Big(\xi_1\SN{v'}^2 +k_m^2\eta_1\SN{v}^2\Big)\label{ReIm-am2}.
\end{align}

Since $k_m^2= k^2-m^2\pi^2/Z^2<0$ for $m > kZ/\pi$, \eqref{ieq:avv-1} comes directly from \eqref{ReIm-am1}. Next, we prove \eqref{ieq:avv-2}. Since $v(R)=0$ and $\chi'(r)=2\chi_0(r-R)$, we have
\begin{align}
\int_R^\rho\eta\SN{v}^2\D r
\le\,& \epsilon^{-1}\int_R^\rho \eta\chi'\SN{v}^2 + \frac{\epsilon}{4}
\int_R^\rho\frac{\eta(r)}{\chi'(r)}\bigg|\int_R^r v'(t)\D t\bigg|^2\D r \notag \\
\le\,&  \epsilon^{-1}\int_R^\rho \eta\chi'\SN{v}^2\D r
+ C_2\epsilon \chi_0d^4 |\alpha(\rho)|^2 \int_R^\rho\xi\SN{v'}^2 \D r, \label{xi-v2}
\end{align}
where $\epsilon >0$ is a constant to be specified and $C_2$ is a positive constant independent of $m$, $\chi_0$, and $\rho$. Inserting \eqref{xi-v2} into \eqref{ReIm-am1}, we have
\begin{align}
\label{ieq:Am-21}
\Re \mathscr{A}_m(v,v) \ge\,& \Big[1-C_2\epsilon k_m^2 \chi_0d^4 |\alpha(\rho)|^2\Big] \int_R^\rho \xi\SN{v'}^2\D r
 -k_m^2\epsilon^{-1}\int_R^\rho \eta\sigma'\SN{v}^2\D r.
\end{align}
For \eqref{ReIm-am2}, noting that there exists a positive constant $C_3$ independent of $m$, $\chi_0$, and $\rho$ such that
\begin{align*}
\frac{\eta_1}{\eta\chi'} =\frac{r-\chi/\chi'}{r+r\chi'+ \chi+ 2\chi\chi'}\le C_3,
\end{align*}
we get
\begin{align}
\label{ieq:Am-22}
\Im \mathscr{A}_m(v,v)\le\,& -C_3k_m^2\int_R^\rho\eta\chi'\SN{v}^2\D r .
\end{align}
Then combining (\ref{ieq:Am-21}) and (\ref{ieq:Am-22}) yields
\begin{align}\label{ieq:Am-2}
& \Re \mathscr{A}_m(v,v)-2C_3^{-1}\epsilon^{-1}\Im \mathscr{A}_m(v,v) \nonumber\\
&\ge
    \Big[1-C_2\epsilon  k_m^2\chi_0d^4 |\alpha(\rho)|^2\Big]
    \int_R^\rho \xi\SN{v'}^2\D r  +\epsilon^{-1}k_m^2\int_R^\rho \eta\chi'\SN{v}^2\D r.
\end{align}
Then choosing $\epsilon^{-1}=2C_2  k_m^2\chi_0d^4 |\alpha(\rho)|^2$ gives
\eqref{ieq:avv-2}.

Finally, the well-posedness of problem \eqref{pro-wm} for all $m\ge 1$ follows directly from the estimates \eqref{ieq:avv-1}-\eqref{ieq:avv-2}. The proof is complete.
\end{proof}

Now we are ready to get the the well-posedness of problem \eqref{pro-hT} which will further leads to a continuity estimate for the DtN map $\hat T$.

\begin{theorem}\label{thm:exist-am}
There exists a unique solution $w\in\Hone[\Omega_\pml]$ to the problem \eqref{pro-hT}. Moreover, there exists a constant $C>0$ independent of $\chi_0$ and $\rho$ such that
\begin{align*}
\NHone[\Omega_\pml]{w}
\le Cd^6\SN{\alpha(\rho)}^4\SN{\beta(\rho)}^2
\N{\eta}_{H^{1/2}(\Gamma_R)}.
\end{align*}
\end{theorem}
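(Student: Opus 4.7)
The plan is to reduce \eqref{pro-hT} to the one-parameter family of ODEs \eqref{pro-wm} via separation of variables and the Fourier expansions \eqref{exp-w}, apply Lemma~\ref{lem:avv} mode by mode to obtain existence/uniqueness of each $w_m$ together with a quantitative $H^1((R,\rho))$-bound, and then reassemble by Parseval. Since Lemma~\ref{lem:avv} is stated on $H^1_0((R,\rho))$ while $w_m$ must carry the inhomogeneous datum $w_m(R)=\eta_m$, I would first lift the data by writing $w_m=\eta_m\psi+\tilde w_m$, where $\psi$ is a fixed smooth cut-off on $(R,\rho)$ with $\psi(R)=1$, $\psi(\rho)=0$ and $\supp\psi\subset[R,R+1]$ (assuming $d\ge 1$, which is harmless), so that neither $\psi$ nor $\psi'$ carries any $\chi_0$- or $d$-dependence. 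The homogenized unknown $\tilde w_m\in H^1_0((R,\rho))$ then satisfies $\mathscr{A}_m(\tilde w_m,v)=-\mathscr{A}_m(\eta_m\psi,v)$ for every test $v\in H^1_0((R,\rho))$.

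For the evanescent modes $m>kZ/\pi$, estimate \eqref{ieq:avv-1} together with Poincar\'e on $(R,\rho)$ gives coercivity of $\mathscr{A}_m$ on $H^1_0((R,\rho))$, since the lower-order term carries the favourable sign ($k_m^2<0$). For the finitely many propagating modes $m<kZ/\pi$, the combination $\Re\mathscr{A}_m-C_1d^3|\alpha(\rho)|^2\Im\mathscr{A}_m$ in \eqref{ieq:avv-2} is positive definite on $H^1_0((R,\rho))$, again after Poincar\'e. In either case, testing against $v=\tilde w_m$, bounding $|\mathscr{A}_m(\eta_m\psi,\tilde w_m)|$ using the explicit form of $\alpha,\beta$ in \eqref{Jac} (which keeps $\psi,\psi'$ independent of the PML parameters), and absorbing $\tilde w_m$-terms by a weighted Young inequality yields a mode-wise bound of the shape
\begin{equation*}
\|w_m\|_{H^1((R,\rho))}\le C\,d^{\gamma_1}|\alpha(\rho)|^{\gamma_2}|\beta(\rho)|^{\gamma_3}(1+m)|\eta_m|,
\end{equation*}
with $C$ independent of $m$, $\chi_0$ and $\rho$ and with exponents coming from the constants in Lemma~\ref{lem:avv}.

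The $L^2$-parts of $w$ and $\partial_r w$ in $\|w\|_{H^1(\Omega_\pml)}^2$ then follow directly from Parseval, while the $z$-derivative picks up an extra $(m\pi/Z)^2$, which is precisely the weight needed to identify $\sum_m m|\eta_m|^2=\|\eta\|^2_{H^{1/2}(\Gamma_R)}$ on the right-hand side. Summing the mode-wise estimate in $m$ and collecting the accumulated powers yields the claimed bound
\begin{equation*}
\|w\|_{H^1(\Omega_\pml)}\le C\,d^6|\alpha(\rho)|^4|\beta(\rho)|^2\,\|\eta\|_{H^{1/2}(\Gamma_R)}.
\end{equation*}
Uniqueness of $w$ in $H^1(\Omega_\pml)$ follows from mode-wise uniqueness (Lemma~\ref{lem:avv}) and density of trigonometric sines in $H^1_0(0,Z)$.

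The conceptual part of the statement --- existence, uniqueness, and qualitative continuity in each mode --- is essentially handed to us by Lemma~\ref{lem:avv}; the real work is the bookkeeping needed to arrive at the exact polynomial $d^6|\alpha(\rho)|^4|\beta(\rho)|^2$. The hardest case is that of the finitely many propagating modes $m<kZ/\pi$: there \eqref{ieq:avv-2} only controls $\int\xi|\tilde w_m'|^2$ after enlarging the test combination by the factor $d^3|\alpha(\rho)|^2$, and converting this weighted gradient energy back to the plain $H^1((R,\rho))$-norm of $w_m$ (while simultaneously bounding the lifting functional $\mathscr{A}_m(\eta_m\psi,\cdot)$ uniformly in $m$) is where the powers of $d$, $|\alpha(\rho)|$ and $|\beta(\rho)|$ pile up. Once this accounting is carried out carefully, the Parseval sum in $z$ closes the argument.
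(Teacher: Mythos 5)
Your skeleton --- separate variables, homogenize the Dirichlet datum, invoke the coercivity estimates of Lemma~\ref{lem:avv}, and reassemble by Parseval --- is the same one the paper uses. The genuine gap is in the lifting step. You lift every mode with a \emph{single} cut-off $\psi$ that is independent of $m$, and for the infinitely many evanescent modes $m>kZ/\pi$ this loses half a derivative. Concretely, the source functional $v\mapsto-\mathscr{A}_m(\eta_m\psi,v)$ contains the term $k_m^2\int\alpha(r\beta)^{-1}\psi\bar v$ with $|k_m|\sim m$; measured against the coercive quantity provided by \eqref{ieq:avv-1}, which is equivalent (up to PML weights) to $\|v'\|_{L^2}^2+|k_m|^2\|v\|_{L^2}^2$, its dual norm is of order $(1+m)|\eta_m|$, not $m^{1/2}|\eta_m|$. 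This gives $\|w_m'\|_{L^2}\lesssim(1+m)|\eta_m|$ and $\|w_m\|_{L^2}\lesssim|\eta_m|$, and after summing, $\|\partial_r w\|^2+\|\partial_z w\|^2\lesssim\sum_m m^2|\eta_m|^2=\|\eta\|_{H^1(\Gamma_R)}^2$: you prove continuity from $H^1(\Gamma_R)$, not from $H^{1/2}(\Gamma_R)$ as the theorem requires. Your own bookkeeping is internally inconsistent on exactly this point: you assert the mode-wise bound $\|w_m\|_{H^1}\lesssim(1+m)|\eta_m|$ and, in the same breath, that the factor $(m\pi/Z)^2$ from $\partial_z$ ``is precisely the weight needed'' to produce $\sum_m m|\eta_m|^2$; combining the two statements would actually require $\sum_m m^4|\eta_m|^2<\infty$, i.e.\ $\eta\in H^2(\Gamma_R)$.

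The repair is what the paper does: replace the one-dimensional, $m$-independent profile by a genuine two-dimensional extension $p\in H^1(\Omega_{2R}\backslash\bar\Omega_R)$ of $\eta$ with $\|p\|_{H^1}\le C\|\eta\|_{H^{1/2}(\Gamma_R)}$ (trace theorem), extended by zero, and homogenize with $\hat w=w-p$. The Fourier coefficients $p_m$ of such an extension automatically carry the anisotropic decay $\|p_m\|_{L^2}\sim m^{-1/2}|\eta_m|$, $\|p_m'\|_{L^2}\sim m^{1/2}|\eta_m|$ that a fixed cut-off cannot provide, and the right-hand side is then estimated by Cauchy--Schwarz directly against $\|p\|_{H^1(\Omega_{2R}\backslash\bar\Omega_R)}$ rather than mode by mode. (Equivalently, you could keep a mode-wise lifting but make its profile $m$-dependent, e.g.\ proportional to $e^{-m\pi(r-R)/Z}$ times a cut-off.) The rest of your outline --- the split into finitely many propagating modes handled by \eqref{ieq:avv-2} with the extra factor $d^3|\alpha(\rho)|^2$, the evanescent modes handled by \eqref{ieq:avv-1}, the conversion of the $\xi$-weighted gradient energy back to the plain norm, and the resulting powers $d^6|\alpha(\rho)|^4|\beta(\rho)|^2$ --- matches the paper and closes once the lifting is fixed.
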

\begin{proof}
There exists an extension $p\in \Hone[\Omega_{2R}\backslash\bar\Omega_R]$ which satisfies $p=\eta$ on $\Gamma_R$, $p=0$ on $\partial\Omega_{2R}\backslash\partial\Omega_{R}$, and
\begin{align}\label{trace-p}
\NHone[\Omega_{2R}\backslash\bar\Omega_R]{p}\le C\N{\eta}_{H^{1/2}(\Gamma_R)},
\end{align}
where the constant $C$ depends only on $R$ and $Z$. Now we extend $p$ by zero to $\Omega_\infty\backslash\bar\Omega_{2R}$ and designate the extension still by $p$.
A weak formulation of \eqref{pro-hT} is to find $\hat{w}:=w-p\in\zbHone[\Omega_\pml]$ such that
\begin{align*}
a_{\pml}(\hat{w},v) =-a_{\pml}(p,v) \quad
\forall\,v\in\zbHone[\Omega_\pml],
\end{align*}
where
\begin{align*}
a_{\pml}(\hat{w},v) := \int_{\Omega_\pml}\frac{1}{r\beta}\bigg(
\frac{1}{\alpha}\frac{\partial \hat{w}}{\partial r}\frac{\partial\bar{v}}{\partial r}
+\alpha \frac{\partial\hat{w}}{\partial z} \frac{\partial\bar{v}}{\partial z}
-\alpha k^2\hat{w}\bar{v}\bigg).
\end{align*}

Consider the Fourier series of $\hat{w}$ and $p$ with coefficients $\hat{w}_m$ and $p_m$, respectively which satisfy $\hat{w}_m=w_m-p_m$. Define
\begin{align*}
&W_1 = \sum_{m<kZ/\pi} \hat{w}_m\sin(m\pi z/Z),\quad
P_1 = \sum_{m<kZ/\pi} p_m\sin(m\pi z/Z),\\
&W_2 = \sum_{m>kZ/\pi} \hat{w}_m\sin(m\pi z/Z),\quad
P_2 =  \sum_{m>kZ/\pi} p_m\sin(m\pi z/Z).
\end{align*}
It is clear that $\hat{w}=W_1+W_2$, $p=P_1+P_2$, and
\begin{align}\label{W1W2}
a_{\pml}(W_1,W_1) =-a_{\pml}(P_1,W_1), \quad
a_{\pml}(W_2,W_2) =-a_{\pml}(P_2,W_2).
\end{align}
Let the function $\xi(r)$ and the constant $C_1$ be given in Lemma~\ref{lem:avv}, and let the space $V(\Omega_\pml)$, as well as the equipped norm, be defined analogous to $V_R$. Using \eqref{ieq:avv-2} and $\hat{w}_m(R)=0$, we have
\begin{align*}
&\N{W_1}_{V(\Omega_\pml)}^2  \\
\le\,& C\sum_{m<kZ/\pi} \bigg[\int_R^\rho\frac{1}{r}
\SN{\hat w_m'(r)}^2\D r
+(k^2+m^2\pi^2/Z^2)\int_R^\rho\frac{1}{r}\bigg|\int_R^r\hat w_m'(t)\D t\bigg|^2\D r\bigg] \\
\le\,& Cd^3\SN{\alpha(\rho)}^2\SN{\beta(\rho)}^2
\sum_{m<kZ/\pi}  \int_R^\rho\xi(r)\SN{\hat w_m'(r)}^2\D r \\
\le\,& Cd^3\SN{\alpha(\rho)}^2\SN{\beta(\rho)}^2
\sum_{m<kZ/\pi}  \big[\Re \mathscr{A}_m(\hat{w}_m,\hat{w}_m)
-C_1d^3\SN{\alpha(\rho)}^{2}\Im \mathscr{A}_m(\hat{w}_m,\hat{w}_m)\big].
\end{align*}
Using \eqref{W1W2} and the relation $a_{\pml}(W_1,W_1) =\displaystyle \frac{Z}{2}\sum\limits_{m<kZ/\pi} \mathscr{A}_m(\hat{w}_m,\hat{w}_m)$, we deduce that
\begin{align*}
\N{W_1}_{V(\Omega_\pml)}^2
\le\,& Cd^6\SN{\alpha(\rho)}^4\SN{\beta(\rho)}^2 \SN{a_{\pml}(P_1,W_1)}.
\end{align*}
Since $P_1$ is only supported in $\Omega_{2R}\backslash\ol\Omega_R$, using \eqref{trace-p} and Schwartz's inequality, we easily get
\begin{align}\label{est:W1}
\N{W_1}_{V(\Omega_\pml)}  \le Cd^6\SN{\alpha(\rho)}^4\SN{\beta(\rho)}^2
\N{P_1}_{V(\Omega_{2R}\backslash\ol\Omega_R)}
\le Cd^6\SN{\alpha(\rho)}^4\SN{\beta(\rho)}^2
\N{\eta}_{H^{1/2}(\Gamma_R)}.
\end{align}

The estimate of $W_2$ is similar but easier. Since $k_m^2<0$ for $m>kZ/\pi$, we have
\begin{align*}
\N{W_2}_{V(\Omega_\pml)}^2  \le\,& C\sum_{m> kZ/\pi} \int_R^\rho\frac{1}{r}
\Big[\SN{\hat w_m'(r)}^2- k_m^2\SN{\hat w_m(r)}^2\Big]\D r \\
\le\,& Cd\SN{\alpha(\rho)}^2\SN{\beta(\rho)}^2
\sum_{m>kZ/\pi}  \int_R^\rho \Big[\xi\SN{\hat{w}_m'(r)}^2
    -C_0 k_m^2 \eta\SN{\hat{w}_m(r)}^2\Big]\D r \\
\le\,& Cd\SN{\alpha(\rho)}^2\SN{\beta(\rho)}^2
\sum_{m>kZ/\pi} \Re \mathscr{A}_m(\hat{w}_m,\hat{w}_m) .
\end{align*}
Analogously, using \eqref{W1W2} and the relation $a_{\pml}(W_2,W_2) =\displaystyle \frac{Z}{2}\sum\limits_{m>kZ/\pi} \mathscr{A}_m(\hat{w}_m,\hat{w}_m)$, we deduce that
\begin{align}\label{est:W2}
\N{W_2}_{V(\Omega_\pml)}
\le C d\SN{\alpha(\rho)}^2\SN{\beta(\rho)}^2
\N{\eta}_{H^{1/2}(\Gamma_R)}.
\end{align}
The proof is finished by combining \eqref{est:W1}--\eqref{est:W2} and \eqref{trace-p}.
\end{proof}

\begin{corollary}
Let $\hat{T}$ be the approximate DtN operator defined in \eqref{hT}.
There exists a constant $C>0$ independent of $\chi_0$ and $\rho$ such that
\begin{align*}
\big\|\hat{T}\eta\big\|_{H^{-1/2}(\Gamma_R)}\le Cd^6\SN{\alpha(\rho)}^4\SN{\beta(\rho)}^2 \N{\eta}_{H^{1/2}(\Gamma_R)}
\quad \forall\,\eta\in H^{1/2}_0(\Gamma_R).
\end{align*}
\end{corollary}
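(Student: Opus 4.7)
The plan is to identify $R\,\partial_r w|_{\Gamma_R}$ (modulo a lower-order boundary term) with $\hat T\eta$, represent the duality pairing $\langle\hat T\eta,\phi\rangle_{\Gamma_R}$ through $a_\pml(w,\tilde\phi)$ for a lift $\tilde\phi$ of the test function $\phi$, and then invoke Theorem~\ref{thm:exist-am}.

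First, for any $\phi\in H^{1/2}_0(\Gamma_R)$ I would pick a standard lift $\tilde\phi\in\Hone[\Omega_\pml]$ satisfying $\tilde\phi|_{\Gamma_R}=\phi$, $\tilde\phi=0$ on $\partial\Omega_\pml\setminus\Gamma_R$, and $\NHone[\Omega_\pml]{\tilde\phi}\le C\N{\phi}_{H^{1/2}(\Gamma_R)}$. Multiplying \eqref{eqn-hT} by $\alpha\overline{\tilde\phi}/(r\beta)$ and integrating by parts over $\Omega_\pml$, all boundary contributions drop except at $\Gamma_R$ (because $\tilde\phi$ vanishes on $\Gamma_\rho$ and at $z=0,Z$, while $\alpha(R)=\beta(R)=1$), producing
\[
a_\pml(w,\tilde\phi) = -R^{-1}\int_{\Gamma_R}\frac{\partial w}{\partial r}\overline{\phi}\,\D z.
\]
On the other hand, differentiating the product $r^{-1}w$ at $r=R$ and using $w(R,\cdot)=\eta$ gives directly from \eqref{hT} that $\hat T\eta=-\eta+R\,\partial_r w|_{r=R}$. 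Combining,
\[
\langle\hat T\eta,\phi\rangle_{\Gamma_R}=-\langle\eta,\phi\rangle_{\Gamma_R}-R^2\,a_\pml(w,\tilde\phi).
\]

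The first term is bounded by $C\N{\eta}_{H^{1/2}(\Gamma_R)}\N{\phi}_{H^{1/2}(\Gamma_R)}$ via the continuous embedding $H^{1/2}_0(\Gamma_R)\hookrightarrow \Ltwo[\Gamma_R]$. For the second, bounding the coefficients $1/(r\alpha\beta)$ and $\alpha/(r\beta)$ in $\Linf[\Omega_\pml]$ (using $r\ge R$, $|\alpha|,|\beta|\ge 1$, and the fact that $|\alpha|$ attains its maximum at $r=\rho$) yields $|a_\pml(w,\tilde\phi)|\le C|\alpha(\rho)|\,\NHone[\Omega_\pml]{w}\,\NHone[\Omega_\pml]{\tilde\phi}$. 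Inserting the estimate of Theorem~\ref{thm:exist-am} for $w$ and the lifting estimate for $\tilde\phi$, dividing by $\N{\phi}_{H^{1/2}(\Gamma_R)}$, and taking the supremum over $\phi\in H^{1/2}_0(\Gamma_R)\setminus\{0\}$ delivers the claimed bound.

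The main obstacle is careful bookkeeping of the PML factors $|\alpha(\rho)|$, $|\beta(\rho)|$: the naive continuity bound for $a_\pml$ contributes one spurious factor of $|\alpha(\rho)|$ on top of the exponents already present in Theorem~\ref{thm:exist-am}. This must be absorbed, for instance by localising $\tilde\phi$ to a thin neighbourhood of $\Gamma_R$ where $\alpha,\beta\approx 1$, or by working throughout with the natural weighted $V$-norm on $\Omega_\pml$ (the same weighted $H^1$-type norm that the proof of Theorem~\ref{thm:exist-am} actually controls) rather than with the unweighted $\Hone[\Omega_\pml]$-norm. Apart from this accounting, the argument is a standard Green's-identity/duality computation.
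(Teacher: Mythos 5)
Your argument is correct and is, in substance, the same as the paper's: the paper writes $\hat T\eta$ on $\Gamma_R$ as $R^2\,\Bv\cdot\Bn-w$ with $\Bv=\big(\tfrac{1}{r\alpha\beta}\tfrac{\partial w}{\partial r},\tfrac{\alpha}{r\beta}\tfrac{\partial w}{\partial z}\big)$, observes from \eqref{eqn-hT} that $\Div\Bv=-k^2\tfrac{\alpha}{r\beta}w\in\Ltwo[\Omega_{\mathrm{pml}}]$, and invokes the $H(\Div)$ normal-trace theorem together with Theorem~\ref{thm:exist-am} --- which is exactly your Green's-identity/duality computation with the explicit lifting $\tilde\phi$ packaged inside the trace theorem instead of written out. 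The bookkeeping issue you flag is genuine but is equally present in the paper's own version: the coefficient $\alpha/(r\beta)$ is only bounded by $C\SN{\alpha(\rho)}$, so the paper's step from $\N{\Bv}_{H(\Div,\Omega_{\mathrm{pml}})}+\NHone[\Omega_{\mathrm{pml}}]{w}$ to $C\NHone[\Omega_{\mathrm{pml}}]{w}$ silently absorbs the same spurious factor of $\SN{\alpha(\rho)}$ that your naive continuity bound for $a_{\mathrm{pml}}$ produces, and the paper carries out neither the localisation near $\Gamma_R$ nor the weighted-norm accounting you propose as a repair. Your proposal is therefore at the same level of rigor as the printed proof, and more candid about where the PML factors enter.
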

\begin{proof}
Given $\eta\in H^{1/2}_0(\Gamma_R)$, let $w\in\Hone[\Omega_\pml]$ be the solution to \eqref{pro-hT}.
Define $\displaystyle \Bv=\Big(\frac{1}{r\alpha\beta}\frac{\partial w}{\partial r},
\frac{\alpha }{r\beta}\frac{\partial w}{\partial z}\Big)$. From \eqref{eqn-hT} we know that
\begin{align}\label{eq:div-Bv}
\Div_{r,z}\Bv = \frac{\partial}{\partial r}
\bigg(\frac{1}{r\alpha\beta}\frac{\partial w}{\partial r}\bigg)
+\frac{\partial}{\partial z}\bigg(\frac{\alpha}{r\beta}\frac{\partial w}{\partial z}\bigg)
=-k^2\frac{\alpha}{r\beta} w \in \Ltwo[\Omega_\pml].
\end{align}
Let $\Bn=(1,0)$ denote the outer normal on $\Gamma_R$. Then \eqref{hT} and the trace theorem indicate that
\begin{align*}
\big\|\hat{T}\eta\big\|_{H^{-1/2}(\Gamma_R)}
=\,& R^2\N{\frac{\partial}{\partial r}(r^{-1}w)}_{H^{-1/2}(\Gamma_R)}  \\
\le\,&  R^2 \N{\frac{1}{r\alpha\beta}\frac{\partial w}{\partial r}}_{H^{-1/2}(\Gamma_R)}
+\N{w}_{H^{-1/2}(\Gamma_R)}\\
=\,& R^2\N{\Bv\cdot\Bn}_{H^{-1/2}(\Gamma_R)} +\N{w}_{H^{-1/2}(\Gamma_R)} \\
\le\,&C\left(\NHdiv[\Omega_\pml]{\Bv} + \N{w}_{H^1(\Omega_\pml)}\right).
\end{align*}
Together with \eqref{eq:div-Bv}, this yields $\big\|\hat{T}\eta\big\|_{H^{-1/2}(\Gamma_R)}
\le C\N{w}_{H^1(\Omega_\pml)}$. The proof is finished upon using Theorem~\ref{thm:exist-am}.
\end{proof}

\subsection{Estimation of $T-\hat{T}$}

Before studying the convergence of the approximate solution $\hat u$ of the problem \eqref{model-hR}, the error estimate of $T\eta -\hat{T}\eta$ for any $\eta\in H^{1/2}_0(\Gamma_R)$ will be addressed in this subsection. To do this, we define the wave propagation operator $\Cp$ which extends $\eta$ to the exterior of $\Omega_R$ in the following way
\begin{align*}
\Cp(\eta)(r,z) :=\frac{r}{R}\sum_{m=1}^\infty \frac{H^{(1)}_1(k_mr)}{H^{(1)}_1(k_mR)}
    \eta_m\sin(m\pi z/Z) \quad \forall\, r\ge R.
\end{align*}
In view of \eqref{exp-u}, it is easy to see that $\Cp(\eta)$ is the solution to the scattering problem
\begin{align*}
&r\frac{\partial}{\partial r}\bigg(\frac{1}{r}\frac{\partial}{\partial r}\Cp(\eta)\bigg)
+\frac{\partial^2}{\partial z^2}\Cp(\eta) + k^2\Cp(\eta)= 0
\quad \hbox{in}\;\; \Omega\backslash\overline{\Omega_R},  \\
&\Cp(\eta)=\eta \quad \hbox{on}\;\;\Gamma_R,  \\
&\lim_{r\to\infty}\SN{\frac{\partial}{\partial r}\Cp(\eta) - ik\Cp(\eta)} = 0.
\end{align*}
In particular, the exact solution $u$ to the scattering problem \eqref{eq:modelR} satisfies $\Cp(u|_{\Gamma_R}) = u$ in $\Omega\backslash\bar\Omega_R$.

Using the complex stretching, we also define a modified wave propagation operator as
\begin{align}\label{eq:tp}
\tilde\Cp(\eta)(r,z) :=\frac{\tilde r}{R}\sum_{m=1}^\infty \frac{H^{(1)}_1(k_m\tilde r)}{H^{(1)}_1(k_mR)}
    \eta_m\sin(m\pi z/Z) \quad \forall\, r\ge R.
\end{align}
It is clear that $\tilde\Cp(\eta)(r,z)=\Cp(\eta)(\tilde r,z)$. The chain rule indicates that $\tilde\Cp(\eta)$ satisfies
\begin{align*}
&\frac{r\beta}{\alpha}\frac{\partial}{\partial r}\bigg(\frac{1}{r\alpha\beta}
    \frac{\partial}{\partial r}\tilde\Cp(\eta)\bigg)
+\frac{\partial^2}{\partial z^2}\tilde\Cp(\eta) + k^2\tilde\Cp(\eta)= 0
\quad \hbox{in}\;\; \Omega\backslash\overline{\Omega_R},  \\
&\tilde\Cp(\eta)=\eta \quad \hbox{on}\;\;\Gamma_R .
\end{align*}

\begin{lemma}\label{lem:decay-tP}
There exists a constant $C>0$ independent of $\rho$ and $\chi_0$ such that
\begin{align*}
\big\|\tilde\Cp(\eta)\big\|_{H^{1/2}(\Gamma_\rho)}
\le CR^{-1}\big|\tilde\rho\big| e^{-0.8\kappa\chi_0d^2}
\|\eta\|_{H^{1/2}(\Gamma_R)}.
\end{align*}
\end{lemma}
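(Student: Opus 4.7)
\medskip
\noindent\textbf{Proof proposal.}
The plan is to decompose $\tilde{\Cp}(\eta)$ on $\Gamma_\rho$ into its Fourier series, reduce the estimate to a mode-wise bound on the Hankel function ratio $H^{(1)}_1(k_m\tilde\rho)/H^{(1)}_1(k_m R)$, and then exploit the exponential decay of $H^{(1)}_1$ in the upper half plane together with the complex shift $\tilde\rho = \rho + (1+i)\chi_0 d^2$ to extract the factor $e^{-0.8\kappa\chi_0 d^2}$ uniformly in $m$.

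\emph{Step 1 (Fourier reduction).} By \eqref{eq:tp}, at $r=\rho$ we have
\begin{align*}
\tilde\Cp(\eta)(\rho,z) = \frac{\tilde\rho}{R}\sum_{m=1}^\infty c_m\,\eta_m\sin(m\pi z/Z),\qquad c_m:=\frac{H^{(1)}_1(k_m\tilde\rho)}{H^{(1)}_1(k_m R)}.
\end{align*}
Using the definition of the norm on $H^{1/2}(\Gamma_\rho)$ (the same as in Section~\ref{subsec:sobolev} with $R$ replaced by $\rho$), Parseval gives
\begin{align*}
\big\|\tilde\Cp(\eta)\big\|_{H^{1/2}(\Gamma_\rho)}^2 \le \frac{|\tilde\rho|^2}{R^2}\sup_{m\ge 1}|c_m|^2\sum_{m=1}^\infty m\,|\eta_m|^2 = \frac{|\tilde\rho|^2}{R^2}\sup_{m\ge 1}|c_m|^2\,\|\eta\|_{H^{1/2}(\Gamma_R)}^2.
\end{align*}
So it suffices to prove the uniform bound $\sup_m |c_m|\le Ce^{-0.8\kappa\chi_0 d^2}$.

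\emph{Step 2 (Propagating modes, $m<kZ/\pi$).} These are finitely many indices, with $k_m>0$ and $k_m\ge\kappa$. Then $k_m\tilde\rho=k_m\rho+(1+i)k_m\chi_0 d^2$, whose imaginary part is $k_m\chi_0 d^2$. The asymptotics $H^{(1)}_1(z)\sim\sqrt{2/(\pi z)}\,e^{i(z-3\pi/4)}$ for $|z|\to\infty$ with $\Im z\ge 0$ yield
\begin{align*}
|H^{(1)}_1(k_m\tilde\rho)|\le C|k_m\tilde\rho|^{-1/2}e^{-k_m\chi_0 d^2},
\end{align*}
and since $k_mR$ ranges in a bounded set of positive reals, $|H^{(1)}_1(k_m R)|$ is bounded below by a strictly positive constant. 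Hence $|c_m|\le C e^{-k_m\chi_0 d^2}\le C e^{-\kappa\chi_0 d^2}$. The $0.8$ factor is absorbed together with the polynomial prefactors at the cost of the constant $C$ and the explicit $|\tilde\rho|/R$ already present in the statement.

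\emph{Step 3 (Evanescent modes, $m>kZ/\pi$).} Here $k_m=i|k_m|$ with $|k_m|\ge\kappa$, so
\begin{align*}
k_m\tilde\rho = -|k_m|\chi_0 d^2 + i|k_m|(\rho+\chi_0 d^2),\qquad k_m R = i|k_m|R,
\end{align*}
and via the identity $H^{(1)}_1(it)=-(2i/\pi)K_1(t)$ together with $K_1(t)\sim\sqrt{\pi/(2t)}e^{-t}$ for $t\to+\infty$ one has $|H^{(1)}_1(k_m R)|\ge c(|k_m|R)^{-1/2}e^{-|k_m|R}$. For the numerator, the uniform asymptotic of $H^{(1)}_1$ in the upper half plane gives
\begin{align*}
|H^{(1)}_1(k_m\tilde\rho)|\le C|k_m\tilde\rho|^{-1/2}e^{-\Im(k_m\tilde\rho)}=C|k_m\tilde\rho|^{-1/2}e^{-|k_m|(\rho+\chi_0 d^2)}.
\end{align*}
Dividing and using $\rho-R=d$ yields
\begin{align*}
|c_m|\le C\sqrt{R/|\tilde\rho|}\,e^{-|k_m|(d+\chi_0 d^2)}\le C\,e^{-\kappa(d+\chi_0 d^2)},
\end{align*}
which is stronger than the required bound. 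Assumption \eqref{kappa} ($\kappa\chi_0 R\ge 1$) lets us absorb any residual $|k_m|$--dependent prefactors into the exponential, again with a small loss in the constant before $\kappa\chi_0 d^2$ (producing the $0.8$).

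\emph{Step 4 (Assembly).} Taking the supremum over $m\ge 1$ of the bounds from Steps 2--3 and plugging into Step 1 gives the claimed inequality. The main technical obstacle is Step 3: we need the lower bound on $|H^{(1)}_1(k_m R)|$ (through $K_1$) to be sharp enough that after dividing by it the ratio still decays at the rate $e^{-0.8\kappa\chi_0 d^2}$ uniformly in $m$, not just for $m$ large. A secondary issue is the small--$m$ lower bound in Step 2, which is handled by the fact that only finitely many indices fall in $\{m:k_m>0\}$ under the non-resonance assumption $k\ne m\pi/Z$; the denominators never vanish and have a uniform positive lower bound on that finite set.
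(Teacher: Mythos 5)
Your proposal is correct in substance and follows the same skeleton as the paper's proof (Fourier reduction on $\Gamma_\rho$, a mode-wise bound on the ratio $c_m=H^{(1)}_1(k_m\tilde\rho)/H^{(1)}_1(k_mR)$, and a split into propagating and evanescent modes), but the key technical step is carried out differently. For $m<kZ/\pi$ the paper invokes the comparison inequality of \cite[Lemma~2.2]{che05}, $\SN{H^{(1)}_1(z)}\le e^{-(1-t^2/|z|^2)^{1/2}\Im(z)}\SN{H^{(1)}_1(t)}$ for $0<t\le|z|$ in the first quadrant, which bounds the ratio in one stroke with constant exactly $1$; the factor $0.8$ then falls out of $\rho\ge 2R$ via $(1-R^2/|\tilde\rho|^2)^{1/2}\ge\sqrt{3}/2>0.8$. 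For $m>kZ/\pi$ it compares $K_1(\SN{k_m}\tilde\rho)$ with $K_1(\SN{k_m}R)$ directly through the integral representation $K_1(s)=\int_0^\infty e^{-s\cosh t}\cosh t\,\D t$, extracting $e^{-\SN{k_m}\chi_0d^2}$ from $\Re\tilde\rho\ge R+\chi_0d^2$. Neither step ever requires a lower bound on a Hankel or Bessel function. You instead bound the numerator by large-argument asymptotics and the denominator from below, which works but needs the extra (standard, and checkable) facts that $\SN{H^{(1)}_1(z)}\le C\SN{z}^{-1/2}e^{-\Im z}$ holds with a uniform constant on the relevant closed subsector of the upper half-plane once $\SN{z}\ge 2\kappa R$, and that $K_1(t)\ge\sqrt{\pi/(2t)}\,e^{-t}$ for all $t>0$ (so the uniform-in-$m$ lower bound you flag as the ``main technical obstacle'' is in fact unproblematic). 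What each route buys: the paper's argument is shorter, fully non-asymptotic, and yields the explicit $0.8$ with constant $C=1$ at the mode level; yours, once the uniformity of the asymptotic constants is pinned down, actually gives the sharper rates $e^{-k_m\chi_0d^2}$ and $e^{-\SN{k_m}(d+\chi_0d^2)}$ at the price of a non-explicit multiplicative constant. One small inaccuracy: in Step 2 there is nothing to ``absorb'' to produce the $0.8$ --- your bound $Ce^{-\kappa\chi_0d^2}$ already implies $Ce^{-0.8\kappa\chi_0d^2}$; the $0.8$ in the paper is an artifact of the comparison lemma's prefactor $(1-R^2/|\tilde\rho|^2)^{1/2}$, not of any prefactor absorption.
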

\begin{proof}
We recall \cite[Lemma~2.2]{che05} for the following estimate
\begin{align*}
\SN{H^{(1)}_1(z)} \le e^{-(1-t^2/|z|^2)^{1/2}\Im(z)}\SN{H^{(1)}_1(t)},
\quad 0< t\le |z|, \quad 0\le \arg(z) \le \pi/2.
\end{align*}
For $1\le m< kZ/\pi$, we have $k_m>0$ and $0<\arg\tilde\rho<\pi/2$. It follows from $R/|\tilde\rho|\le 0.5$ that
\begin{align}\label{ieq:Hm-small}
\big|H^{(1)}_1(k_m\tilde\rho)\big|
\le e^{-0.8\kappa\chi_0d^2}\big|H^{(1)}_1(k_mR)\big|.
\end{align}
For $m> kZ/\pi$, we have $k_m=\Vi\SN{k_m}$, $H^{(1)}_1(k_m\tilde\rho)$ is connected with the modified Bessel function through
\ben
H^{(1)}_1(k_m\tilde\rho) = -\frac{2}{\pi} K_1(|k_m|\tilde\rho).
\een
Moreover, by \cite[eq. (10.32.9)]{olv10}, the modified Bessel function of the $m^{\rm th}$ order satisfies
\ben
K_1(|k_m|\tilde\rho) =\int_0^{\infty}e^{-|k_m|\tilde\rho\cosh t}\cosh(t)\D t.
\een
We infer that
\begin{align}
\big|H^{(1)}_1(k_m\tilde\rho)\big| \le\,& \frac{2}{\pi} \int_0^{\infty}
    e^{-|k_m|(\Re\tilde\rho)\cosh t}\cosh(t)\D t \le\, e^{-\kappa\chi_0d^2}\big|H^{(1)}_1(k_mR)\big|.
    \label{ieq:Hm-large}
\end{align}

Substituting \eqref{ieq:Hm-small} and \eqref{ieq:Hm-large} into \eqref{eq:tp} shows
\begin{align*}
\big\|\tilde\Cp(\eta)\big\|_{H^{1/2}(\Gamma_\rho)}^2 =\,&
\frac{\big|\tilde\rho\big|^2}{R^{2}}\sum_{m=1}^\infty m
\bigg|\frac{H^{(1)}_1(k_m\tilde r)}{H^{(1)}_1(k_mR)}\bigg|^2\SN{\eta_m}^2
\le \frac{\big|\tilde\rho\big|^2}{R^{2}} e^{-1.6\kappa\chi_0d^2}
\|\eta\|_{H^{1/2}(\Gamma_R)}^2.
\end{align*}
The proof is finished.
\end{proof}

The error estimates of $T-\hat{T}$ is given in the following corollary.
\begin{corollary}\label{cor:T-hatT}
For any $\eta\in H^{1/2}(\Gamma_R)$, there exists a constant $C>0$ independent of $\rho$ and $\chi_0$ such that
\begin{align}\label{result:T-hatT}
\|T\eta -\hat{T}\eta\|_{H^{-1/2}(\Gamma_R)}\le Cd^6\SN{\alpha(\rho)}^4\SN{\beta(\rho)}^2\big|\tilde\rho\big| e^{-0.8\kappa\chi_0d^2}
\|\eta\|_{H^{1/2}(\Gamma_R)}.
\end{align}
\end{corollary}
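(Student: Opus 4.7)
The plan is to reduce the difference $T\eta-\hat T\eta$ to the normal trace on $\Gamma_R$ of a function that solves the stretched PML equation in $\Omega_\pml$ with homogeneous Dirichlet data on $\Gamma_R$ and exponentially small data on $\Gamma_\rho$, and then to invoke the well-posedness machinery already developed for $\hat T$. First, I would observe that $\tilde\Cp(\eta)$ is precisely the analytic continuation of the outgoing extension $v=\Cp(\eta)$ along the complexified radius; since $\chi(R)=0$ one has $\alpha(R)=\beta(R)=1$ and $\tilde r|_{r=R}=R$, so $\partial_r(r^{-1}v)|_{\Gamma_R}=\partial_r(r^{-1}\tilde\Cp(\eta))|_{\Gamma_R}$. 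Letting $w$ denote the solution of \eqref{pro-hT} and setting $\psi:=\tilde\Cp(\eta)-w$ therefore gives
$$T\eta-\hat T\eta = R^{2}\,\partial_r\bigl(r^{-1}\psi\bigr)\big|_{\Gamma_R},$$
with $\psi$ satisfying \eqref{eqn-hT} in $\Omega_\pml$, vanishing on $\Gamma_R$ and on the top/bottom portions of $\partial\Omega_\pml$, and equal to $\tilde\Cp(\eta)|_{\Gamma_\rho}$ on $\Gamma_\rho$.

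Next, I would convert the left-hand side into an $H^{1}$-norm of $\psi$ by repeating the $H(\Div)$ trace argument used in the preceding corollary. Introducing the vector field $\Bv=(r^{-1}\alpha^{-1}\beta^{-1}\partial_r\psi,\,r^{-1}\alpha\beta^{-1}\partial_z\psi)$, the stretched equation gives $\Div_{r,z}\Bv\in L^{2}(\Omega_\pml)$ with norm controlled by $\|\psi\|_{L^{2}(\Omega_\pml)}$, while on $\Gamma_R$ the PML coefficients reduce to unity so that $\Bv\cdot\Bn=R^{-1}\partial_r\psi|_{\Gamma_R}$. The trace theorem for $H(\Div,\Omega_\pml)$ then yields
$$\bigl\|T\eta-\hat T\eta\bigr\|_{H^{-1/2}(\Gamma_R)}\le C\,\|\psi\|_{H^{1}(\Omega_\pml)}.$$

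The heart of the argument is to bound $\|\psi\|_{H^{1}(\Omega_\pml)}$ by the Dirichlet data on $\Gamma_\rho$. For this I would rerun the proof of Theorem~\ref{thm:exist-am} with the roles of the two radial boundaries swapped: construct a lift $p\in H^{1}(\Omega_\pml)$ supported in a neighbourhood of $\Gamma_\rho$ with $p|_{\Gamma_\rho}=\tilde\Cp(\eta)|_{\Gamma_\rho}$, $p|_{\Gamma_R}=0$, and $\|p\|_{H^{1}(\Omega_\pml)}\le C\|\tilde\Cp(\eta)\|_{H^{1/2}(\Gamma_\rho)}$. Then $\psi-p\in H^{1}_{0}(\Omega_\pml)$ solves the variational problem with right-hand side $-a_\pml(p,\cdot)$. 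The modal estimates of Lemma~\ref{lem:avv} apply unchanged, since they only use that each coefficient vanishes at $r=R$, which is exactly what $\psi_m$ satisfies. Splitting the Fourier expansion into propagating ($m<kZ/\pi$) and evanescent ($m>kZ/\pi$) modes exactly as in Theorem~\ref{thm:exist-am} reproduces
$$\|\psi\|_{H^{1}(\Omega_\pml)} \le C\,d^{6}\,|\alpha(\rho)|^{4}\,|\beta(\rho)|^{2}\,\bigl\|\tilde\Cp(\eta)\bigr\|_{H^{1/2}(\Gamma_\rho)}.$$
Finally, Lemma~\ref{lem:decay-tP} converts the right-hand side into $CR^{-1}|\tilde\rho|e^{-0.8\kappa\chi_0 d^{2}}\|\eta\|_{H^{1/2}(\Gamma_R)}$, and absorbing $R^{-1}$ into the constant delivers \eqref{result:T-hatT}.

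The main obstacle is the adaptation of the well-posedness bound of Theorem~\ref{thm:exist-am} to data posed on $\Gamma_\rho$ rather than $\Gamma_R$. The coercivity estimates in Lemma~\ref{lem:avv} were set up precisely to exploit the vanishing of the modal coefficient at $r=R$, which $\psi_m$ still enjoys, so those inequalities transfer verbatim. What requires care is choosing the lift $p$ supported well inside the PML region so that $a_\pml(p,\cdot)$ contributes only through $\|\tilde\Cp(\eta)\|_{H^{1/2}(\Gamma_\rho)}$, and keeping track of the same $d^{6}|\alpha(\rho)|^{4}|\beta(\rho)|^{2}$ growth factor in the propagating-mode estimate; everything else is a direct reuse of the estimates already established in Sections~\ref{sec:variational}--\ref{sec:trupml}.
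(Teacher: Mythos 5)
Your proposal is correct and follows essentially the same route as the paper: the error $T\eta-\hat T\eta$ is identified with $R^{2}\partial_r(r^{-1}p)|_{\Gamma_R}$ for the auxiliary PML problem with zero data on $\Gamma_R$ and data $\tilde\Cp(\eta)$ on $\Gamma_\rho$, whose $H^{1}$-norm is bounded by rerunning Theorem~\ref{thm:exist-am} and then combined with the decay estimate of Lemma~\ref{lem:decay-tP}. The only cosmetic difference is that you invoke the $H(\Div)$ trace theorem to control the normal derivative on $\Gamma_R$, where the paper uses the equivalent variational identity directly.
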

\begin{proof}
To prove this result, we consider the following problem in the PML region
\begin{subequations}\label{AP:errT}
\begin{align}
&\frac{r\beta}{\alpha}\frac{\partial}{\partial r}
\bigg(\frac{1}{r\alpha\beta}\frac{\partial p}{\partial r}\bigg)
+\frac{\partial^2p}{\partial z^2} + k^2 p = 0
\quad \hbox{in}\;\;\Omega_{\pml}, \label{eqn-errT} \\
&p=0 \quad \hbox{on}\;\;\Gamma_R, \label{bc0-errT}\\
&p=0 \quad \hbox{on}\;\;\{z=0\}\cup\{z=Z\}, \label{bc1-errT}\\
&p=w \quad \hbox{on}\;\;\Gamma_\rho.\label{bc1-errT2}
\end{align}
\end{subequations}
It follows analogously from the proof of Theorem~\ref{thm:exist-am} that there exists a constant $C>0$ independent of $\rho$ and $\chi_0$ such that
\begin{align*}
\NHone[\Omega_\pml]{p}
\le Cd^6\SN{\alpha(\rho)}^4\SN{\beta(\rho)}^2
\N{w}_{H^{1/2}(\Gamma_\rho)}.
\end{align*}
For any $\varphi\in H^1(\Omega_{\mathrm{pml}})$ such that $\varphi=0$ on $\Gamma_\rho$, we obtain
\begin{align*}
a_{\pml}(p,\varphi)+ \frac{1}{R}\int_{\Gamma_R}\frac{\partial p}{\partial r} \bar\varphi dz=0.
\end{align*}
Thus,
\begin{align}\label{Dp:err}
\left\| \frac{\partial p}{\partial r}\right\|_{H^{-1/2}(\Gamma_R)}\le C_1d^6\SN{\alpha(\rho)}^4\SN{\beta(\rho)}^2
\N{w}_{H^{1/2}(\Gamma_\rho)},
\end{align}
where $C_1>0$ is a constant independent of $\rho$ and $\chi_0$. Now, for any $\eta\in H^{1/2}(\Gamma_R)$, it follows from the definitions of $T$ and $\hat{T}$ that
\begin{align}\label{redef}
T\eta -\hat{T}\eta = \displaystyle R^2\frac{\partial}{\partial r}(r^{-1}p) \quad\mbox{on}\quad\Gamma_R,
\end{align}
where $p$ is the solution to the Dirichlet problem (\ref{AP:errT}) with $w=\tilde\Cp(\eta)$ on $\Gamma_\rho$.
Then the final estimates (\ref{result:T-hatT}) results by combining (\ref{Dp:err}) and (\ref{redef}).
\end{proof}

\subsection{Exponential convergence}
\label{sec:expcon}

Now we are ready to show the convergence of the solution $\hat{u}$ to the approximate problem (\ref{pro-pml}) , or equivalently, problem (\ref{model-hR}). Similar to \eqref{weak}, the variational problem of \eqref{model-hR} is proposed as follows: find $\hat{u}\in V_R$ such that $\hat{u}=u_D$ on $\Sigma_R$ and
\begin{align}\label{weak-app}
\hat{a}(\hat{u},v) = \int_{\Gamma_{\mathrm{right}}^1} u_N\bar{v}\D r \quad \forall\, v\in V_{R,0},
\end{align}
where the sesquilinear form $\hat{a}$ is defined by
\begin{align}\label{eq:hat-a}
\hat{a}(u,v) = \int_{\Omega_R}\bigg[\frac{\partial u}{\partial r}
\frac{\partial\bar{v}}{\partial r}
+\frac{\partial u}{\partial z}\frac{\partial \bar{v}}{\partial z}
- (k^2 + \Vi\omega\mu\sigma) u \bar{v}\bigg]\frac{1}{r}\, \D r\D z
-\frac{1}{R^2}\langle \hat Tu + u, v\rangle_{\Gamma_R}.
\end{align}

\begin{theorem}\label{thm:err}
Assuming the PML parameters $\rho$ and $\chi_0$ being such that
\begin{align}\label{well:assum}
C_0:=C_{\mathrm{inf}}-C_1d^6\SN{\alpha(\rho)}^4\SN{\beta(\rho)}^2\big|\tilde\rho\big| e^{-0.8\kappa\chi_0d^2}>0,
\end{align}
where the constants $C_{\mathrm{inf}}>0, C_1>0$ independent of $\rho$ and $\chi_0$ come from Theorem~\ref{thm:infsup} and Corollary~\ref{cor:T-hatT}, then there exists a unique solution to the variational problem (\ref{weak-app}). Moreover, there exists a constant $C>0$ independent of $\rho$ and $\chi_0$ such that
\begin{align}\label{eq:err}
\N{u-\hat{u}}_{V_R} \le Cd^6\SN{\alpha(\rho)}^4\SN{\beta(\rho)}^2\big|\tilde\rho\big| e^{-0.8\kappa\chi_0d^2} \N{\hat{u}}_{V_R}.
\end{align}
\end{theorem}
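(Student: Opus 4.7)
The plan is to treat the problem (\ref{weak-app}) as a compact perturbation of (\ref{weak}) through the operator difference $T-\hat{T}$, and to let the estimate in Corollary~\ref{cor:T-hatT} together with the inf-sup bound from Theorem~\ref{thm:infsup} do all the heavy lifting. Concretely, the two forms differ only in the boundary term:
\begin{align*}
\hat{a}(w,v) - a(w,v) = -R^{-2}\langle(\hat{T}-T)w,v\rangle_{\Gamma_R}
\qquad\forall\, w,v\in V_R.
\end{align*}
Combining Corollary~\ref{cor:T-hatT} with the continuity of the trace operator $\gamma_R:V_{R,0}\to H^{1/2}_0(\Gamma_R)$, I get the bound
\begin{align*}
|\hat{a}(w,v)-a(w,v)|
\le C_1 d^6\SN{\alpha(\rho)}^4\SN{\beta(\rho)}^2|\tilde\rho| e^{-0.8\kappa\chi_0 d^2}
\|w\|_{V_R}\|v\|_{V_R},
\end{align*}
with a constant $C_1$ independent of $\rho,\chi_0$.

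First I would establish an inf-sup condition for $\hat{a}$. For any $w\in V_{R,0}$ the triangle inequality gives
\begin{align*}
\sup_{0\neq v\in V_{R,0}}\frac{|\hat{a}(w,v)|}{\|v\|_{V_R}}
\ge \sup_{0\neq v\in V_{R,0}}\frac{|a(w,v)|}{\|v\|_{V_R}}
  -\sup_{0\neq v\in V_{R,0}}\frac{|\hat{a}(w,v)-a(w,v)|}{\|v\|_{V_R}}
\ge C_0\|w\|_{V_R},
\end{align*}
where $C_0$ is the positive constant from \eqref{well:assum}. Existence and uniqueness of $\hat{u}$ then follows by a standard Fredholm-alternative argument identical to the one used in the proof of Theorem~\ref{thm:infsup}: rewrite (\ref{weak-app}) as $\hat{u} - \Ck\hat{u} = f$ with $\Ck$ a compact operator (the zeroth-order and boundary terms), so that the just-proved inf-sup condition forces the kernel to be trivial, hence invertibility.

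Next, for the error estimate, I subtract an extension of $u_D$ so that $e:=u-\hat{u}\in V_{R,0}$ (both $u$ and $\hat{u}$ share the same Dirichlet data on $\Sigma_R$ and the same Neumann data on $\Gamma^1_{\mathrm{right}}$). Subtracting (\ref{weak-app}) from (\ref{weak}) and moving the boundary perturbation to the right-hand side yields the Galerkin-type identity
\begin{align*}
a(e,v) = a(u,v) - a(\hat{u},v)
       = \hat{a}(\hat{u},v) - a(\hat{u},v)
       = -R^{-2}\langle(\hat{T}-T)\hat{u},v\rangle_{\Gamma_R}
       \quad\forall\,v\in V_{R,0}.
\end{align*}
Applying the inf-sup condition of Theorem~\ref{thm:infsup} to $e\in V_{R,0}$, together with the duality bound in Corollary~\ref{cor:T-hatT} and the trace inequality, gives
\begin{align*}
C_{\inf}\|u-\hat{u}\|_{V_R}
\le \sup_{0\neq v\in V_{R,0}}\frac{|a(e,v)|}{\|v\|_{V_R}}
\le C d^6\SN{\alpha(\rho)}^4\SN{\beta(\rho)}^2|\tilde\rho| e^{-0.8\kappa\chi_0 d^2}\|\hat{u}\|_{V_R},
\end{align*}
which is precisely \eqref{eq:err}.

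The main obstacle I anticipate is bookkeeping rather than analysis: one must check that $u-\hat{u}$ genuinely lies in $V_{R,0}$ (in particular that the trace on $\Gamma_R$ lies in $H^{1/2}_0(\Gamma_R)$ so that $T$ and $\hat{T}$ apply to it) and that the continuity constant for the trace $V_R\to H^{1/2}(\Gamma_R)$ is absorbed into the generic constant without spoiling the independence from $\rho$ and $\chi_0$. Everything else is a transparent combination of Theorem~\ref{thm:infsup}, Corollary~\ref{cor:T-hatT}, and the Fredholm machinery already developed in Section~\ref{sec:variational}.
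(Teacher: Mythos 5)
Your proposal is correct and follows essentially the same route as the paper: both treat $\hat{a}$ as a boundary perturbation of $a$ via $T-\hat{T}$, derive the inf-sup condition for $\hat{a}$ by the triangle inequality with the bound from Corollary~\ref{cor:T-hatT}, and obtain \eqref{eq:err} from the identity $a(u-\hat{u},v)=R^{-2}\langle(T-\hat{T})\hat{u},v\rangle_{\Gamma_R}$ combined with the inf-sup condition of Theorem~\ref{thm:infsup}. Your explicit mention of the Fredholm step for existence only elaborates what the paper leaves as ``follows immediately,'' so no substantive difference.
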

\begin{proof}
It follows from Theorem~\ref{thm:infsup} and Corollary~\ref{cor:T-hatT} that there exists constants $C_{\mathrm{inf}}>0, C_1>0$ independent of $\rho$ and $\sigma_0$ such that
\begin{align*}
\sup_{0\ne v\in V_{R,0}}\frac{\SN{\hat{a}(u,v)}}{\N{v}_{V_R}}=\,& \sup_{0\ne v\in V_{R,0}}\frac{\SN{a(u,v)+ R^{-2}\langle (T-\hat T)u, v\rangle_{\Gamma_R}}}{\N{v}_{V_R}}\\
%\ge \,& \sup_{0\ne v\in V_{R,0}}\frac{\SN{a(u,v)}-R^{-2}\SN{\langle (T-\hat T)u, v\rangle_{\Gamma_R}}}{\N{v}_{V_R}}\\
\ge \,& \sup_{0\ne v\in V_{R,0}}\frac{\SN{a(u,v)}}{\N{v}_{V_R}} -R^{-2}\sup_{0\ne v\in V_{R,0}}\frac{\SN{\langle (T-\hat T)u, v\rangle_{\Gamma_R}}}{\N{v}_{V_R}}\\
\ge \,& \left(C_{\mathrm{inf}}-C_1d^6\SN{\alpha(\rho)}^4\SN{\beta(\rho)}^2\big|\tilde\rho\big| e^{-0.8\kappa\chi_0d^2} \right)\N{u}_{V_R}.
\end{align*}
Then under the assumption (\ref{well:assum}), the existence and uniqueness of the solution to the variational problem (\ref{weak-app}) follows immediately. It remains to prove the error estimate \eqref{eq:err}. From \eqref{weak} and \eqref{weak-app}, it is easy to see that the error function $e:= u-\hat u\in V_{R,0}$ satisfies
\begin{align}\label{weak-app1}
a(e,v) = R^{-2}\langle (T-\hat T)\hat{u}, v\rangle_{\Gamma_R}\quad \forall\, v\in V_{R,0}.
\end{align}
By the inf-sup condition in Theorem~\ref{thm:infsup}, we have
\begin{align*}
\N{u-\hat{u}}_{V_R}\le\,& C^{-1}_{\inf}\sup_{0\ne v\in V_{R,0}}\frac{\SN{a(e,v)}}{\N{v}_{V_R}}\\
=\,& R^{-2}C^{-1}_{\inf}\sup_{0\ne v\in V_{R,0}}\frac{\SN{\langle (T-\hat T)\hat{u}, v\rangle_{\Gamma_R}}}{\N{v}_{V_R}} \\
\le\,& Cd^6\SN{\alpha(\rho)}^4\SN{\beta(\rho)}^2\big|\tilde\rho\big| e^{-0.8\kappa\chi_0d^2} \N{\hat{u}}_{V_R},
\end{align*}
where $C>0$ is a constant independent of $\rho$ and $\chi_0$.
\end{proof}

\section{Numerical experiments}
\label{sec:numer}

In this section, two numerical examples are presented to illustrate the efficiency of our model to simulate the signal propagation in axons. All the parameters are selected in dimensionless type. The finite element method is utilized for the numerical discretization, for which the error estimates is left for future work, and the particular implementation for the numerical experiments is programmed in Matlab.

In the first example, we test the convergence of the numerical solution arising from the PML truncation and finite element discretization. We consider the following PML problem
\begin{subequations}
\begin{align*}
&\frac{r\beta}{\alpha}\frac{\partial}{\partial r}
\bigg(\frac{1}{r\alpha\beta}\frac{\partial w}{\partial r}\bigg)
+\frac{\partial^2w}{\partial z^2} + k^2 w = 0
\quad \hbox{in}\;\;\Omega', \\
&w=\eta \quad \hbox{on}\;\;\Gamma_{R'}, \\
&w=0 \quad \hbox{on}\;\;\partial\Omega'\backslash\Gamma_{R'},
\end{align*}
\end{subequations}
where $\Omega' = [0,\pi]\times[1,11]$ with the PML region $\Omega_{\rm pml} = [0,\pi]\times[10,11]$ and $R=10$, $R'=11$. We set $k=2$, $\chi_0=40$ and the exact solution is given by $w=rH_1^{(1)}(k_mr)\sin(mz)$ with $m=1$. Figure~\ref{example1} displays the numerical errors in $L^2$ and $H^1$-norms with respect to the finite element meshsize $h$ which clearly shows the second- and first-order convergence, respectively.

\begin{figure}[htbp]
\centering
\includegraphics[scale=0.1]{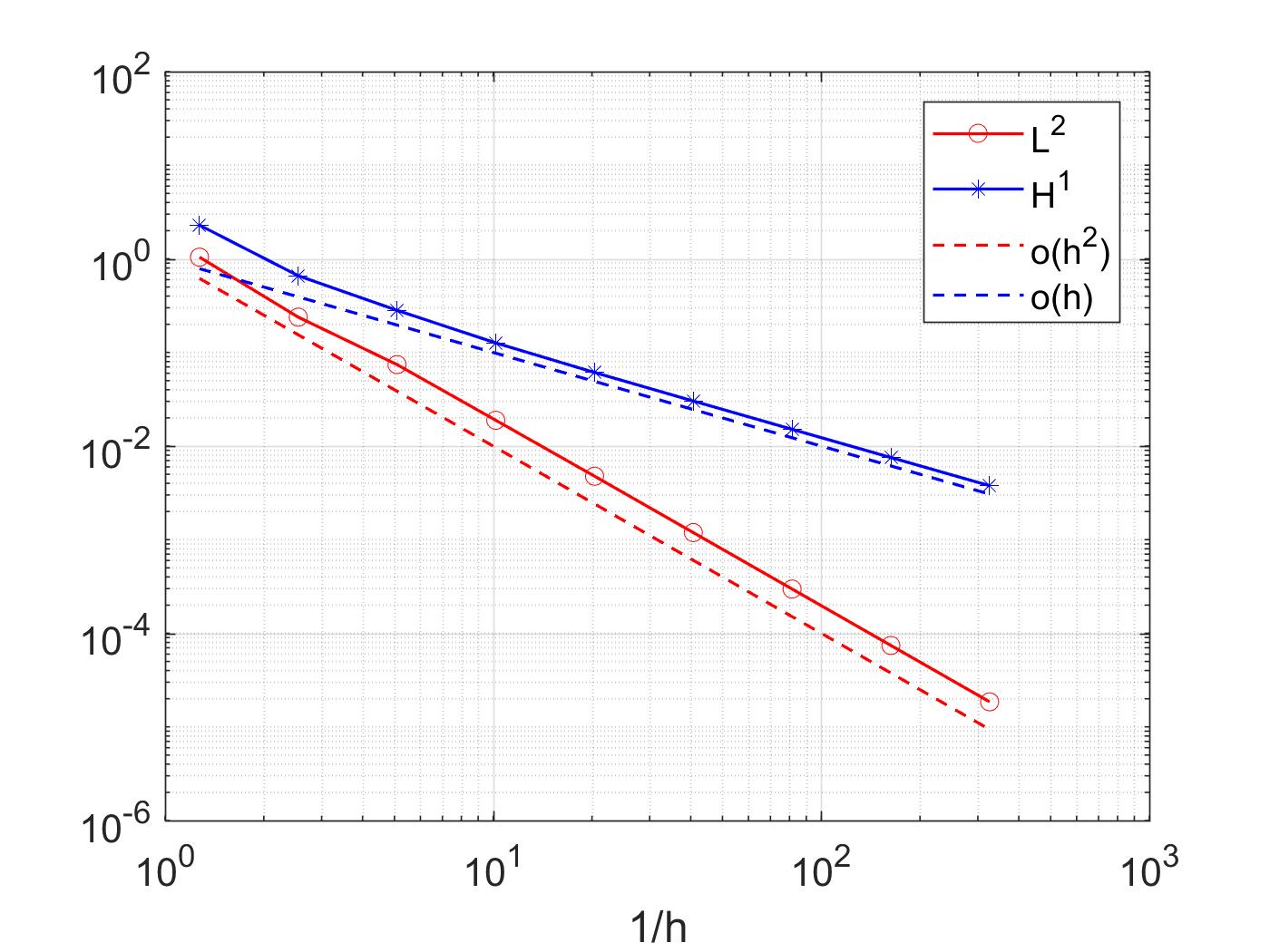}
\caption{Log-log plot the numerical errors in $L^2$ and $H^1$-norms with respect to the finite element meshsize $h$.}
\label{example1}
\end{figure}

Next, we test the propagation of an incident signal given on $\Gamma_{\rm left}$ in the axon and the TE mode is considered. We set
\ben
u_1(r) = \begin{cases}
-J_{1}(k_cr) & \mbox{on}\quad\Gamma_{\rm left}\cap\partial D_1,\cr
0 & \mathrm{otherwise},
\end{cases}\quad u_2(r)=0,\quad u_N=0,
\een
with $k_c = 2\times 3.831705970207512$ and choose the parameters
\ben
(\varepsilon,\sigma)=\begin{cases}
(2,0.2) & \mbox{in}\quad D_1,\cr
(10,0) & \mbox{in}\quad D_2,\cr
(1.2,0) & \mbox{in}\quad D^c,
\end{cases}\quad \omega=5,\quad \mu=1.
\een
The considered axon structures, wherein the axon is wrapped by a long myelin sheath or two separated myelin sheaths or the myelin sheath is absent, are presented in Figure \ref{fig:setup}. The real parts of the wave fields $H_\theta$ and $E_z, E_r$ are plotted in Figures \ref{fig:Ht}-\ref{fig:Er}, respectively. It can be clearly observed that the existence of myelin sheath can gather the electromagnetic fields to propagate mainly in myelin sheath.

\begin{figure}[htbp]
\centering
\begin{tabular}{ccc}
\includegraphics[scale=0.1]{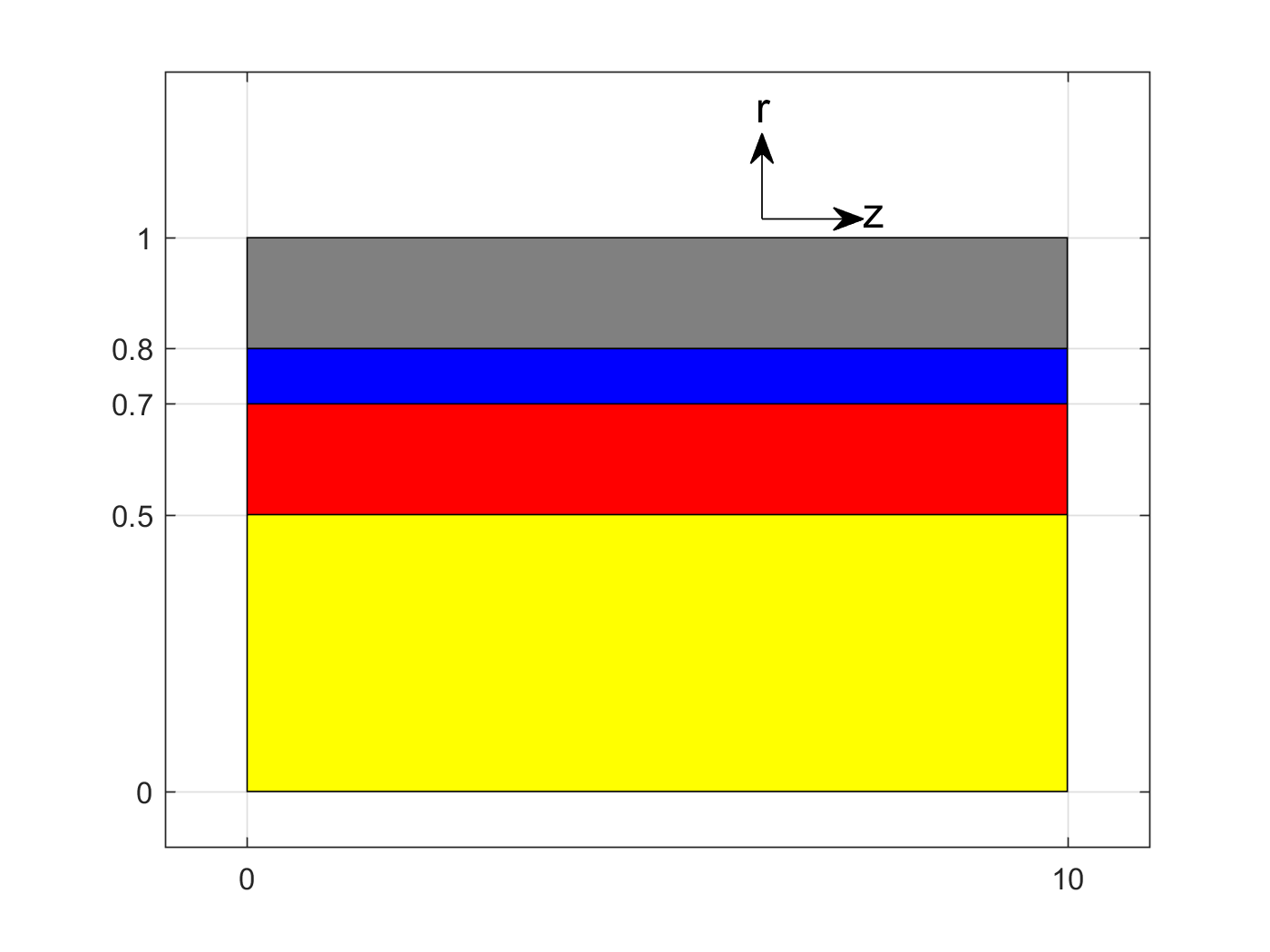} &
\includegraphics[scale=0.1]{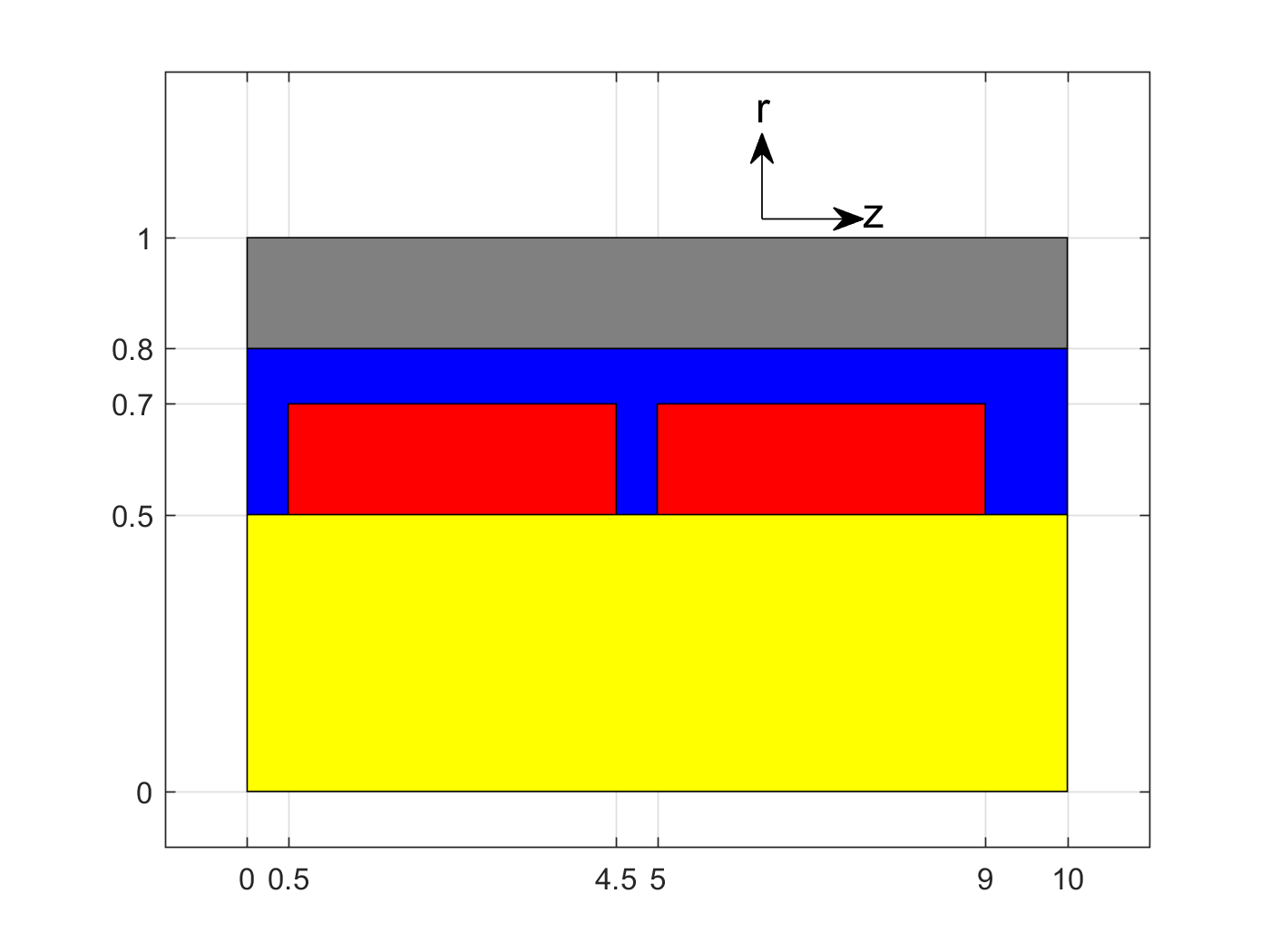} &
\includegraphics[scale=0.1]{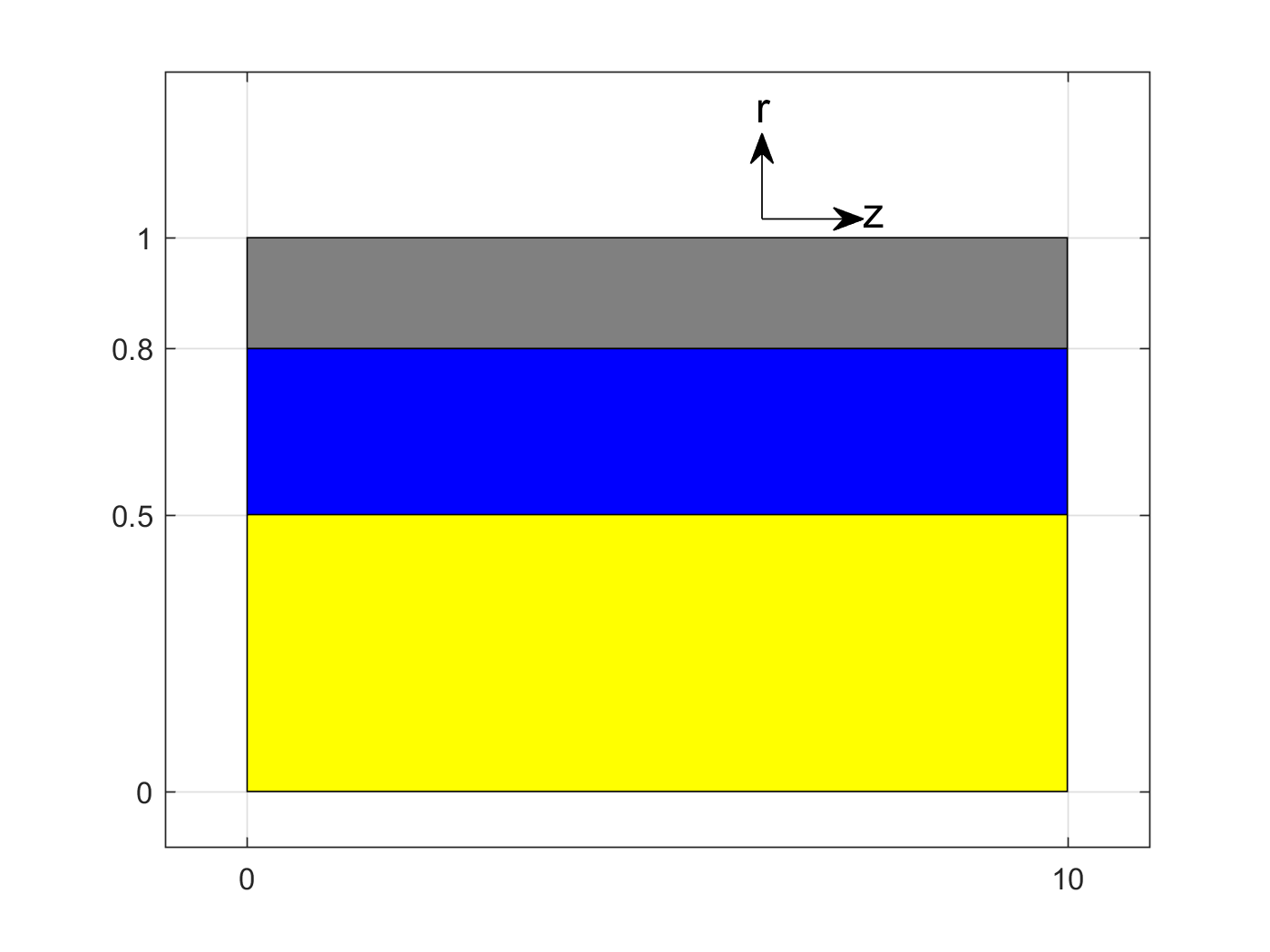}\\
(a) & (b) & (c)
\end{tabular}
\caption{The set-up for simulations wherein the axon is coated by a long myelin sheath (a) or two separated myelin sheaths (b), or the myelin sheath is absent (c). The yellow, red, blue and grey zones represent the regions $D_1$, $D_2$, $D_R^c$ and $\Omega_{\mathrm{pml}}$, respectively.}
\label{fig:setup}
\end{figure}

\begin{figure}[htbp]
\centering
\begin{tabular}{ccc}
\includegraphics[scale=0.1]{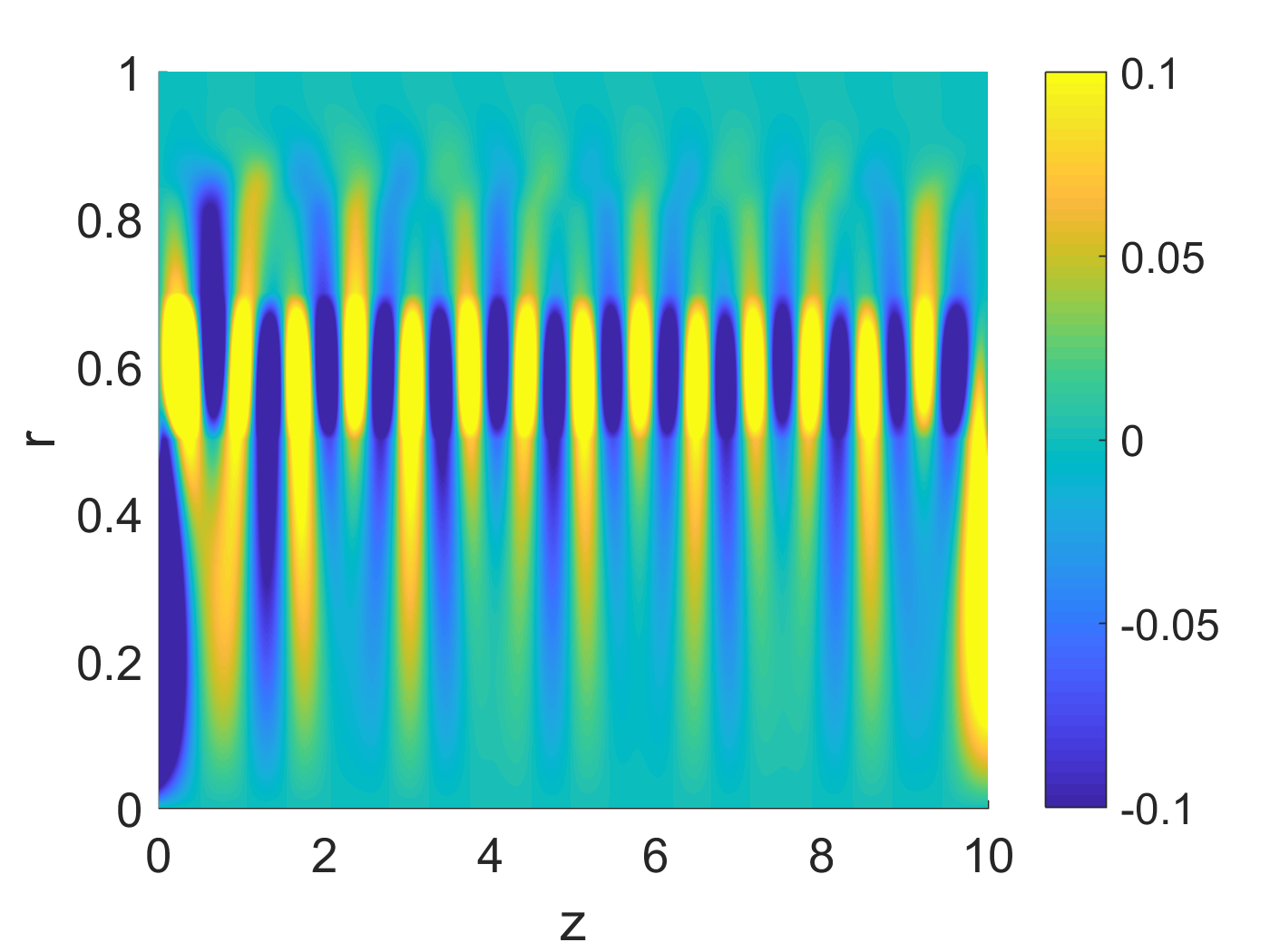} &
\includegraphics[scale=0.1]{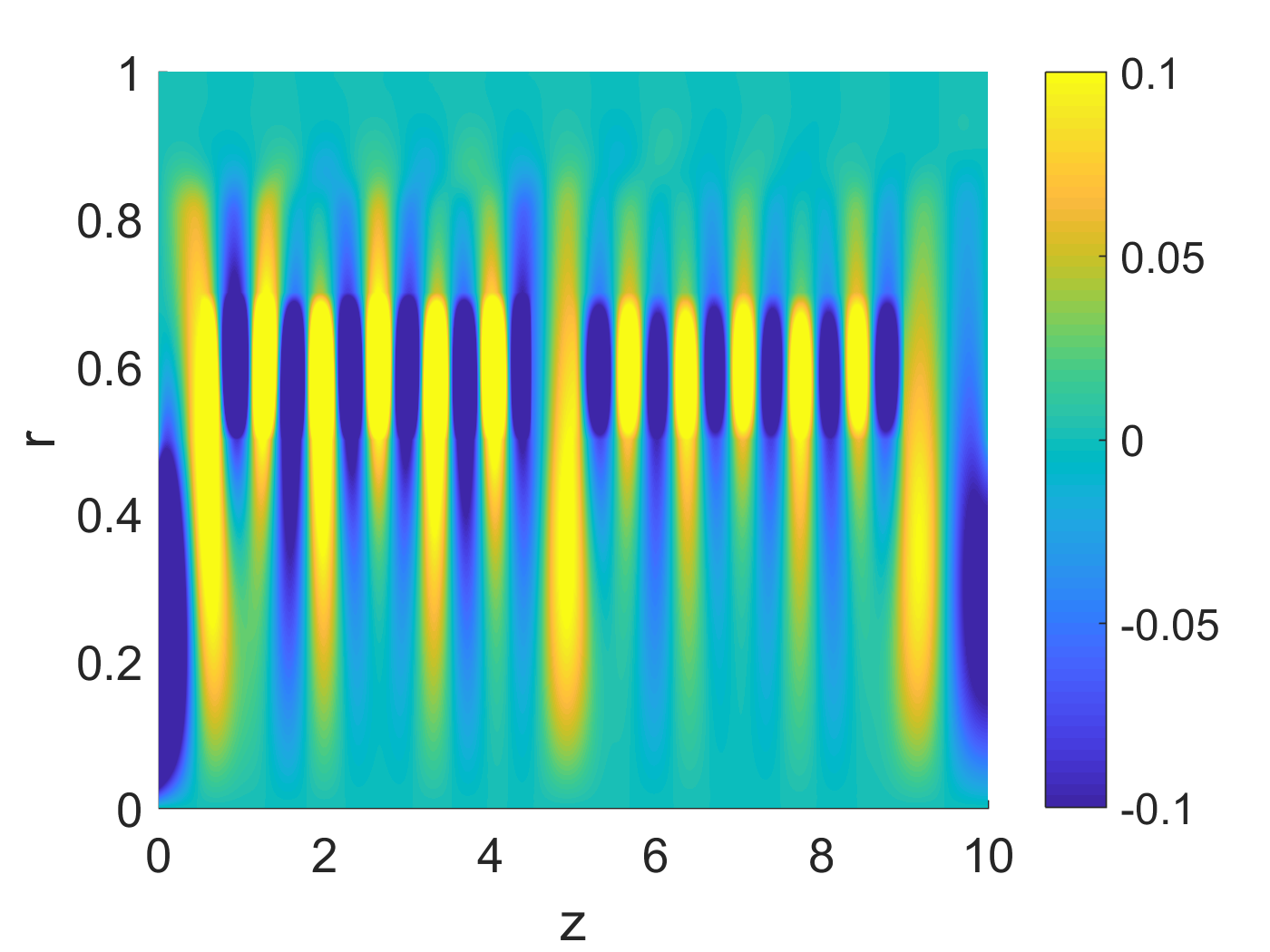} &
\includegraphics[scale=0.1]{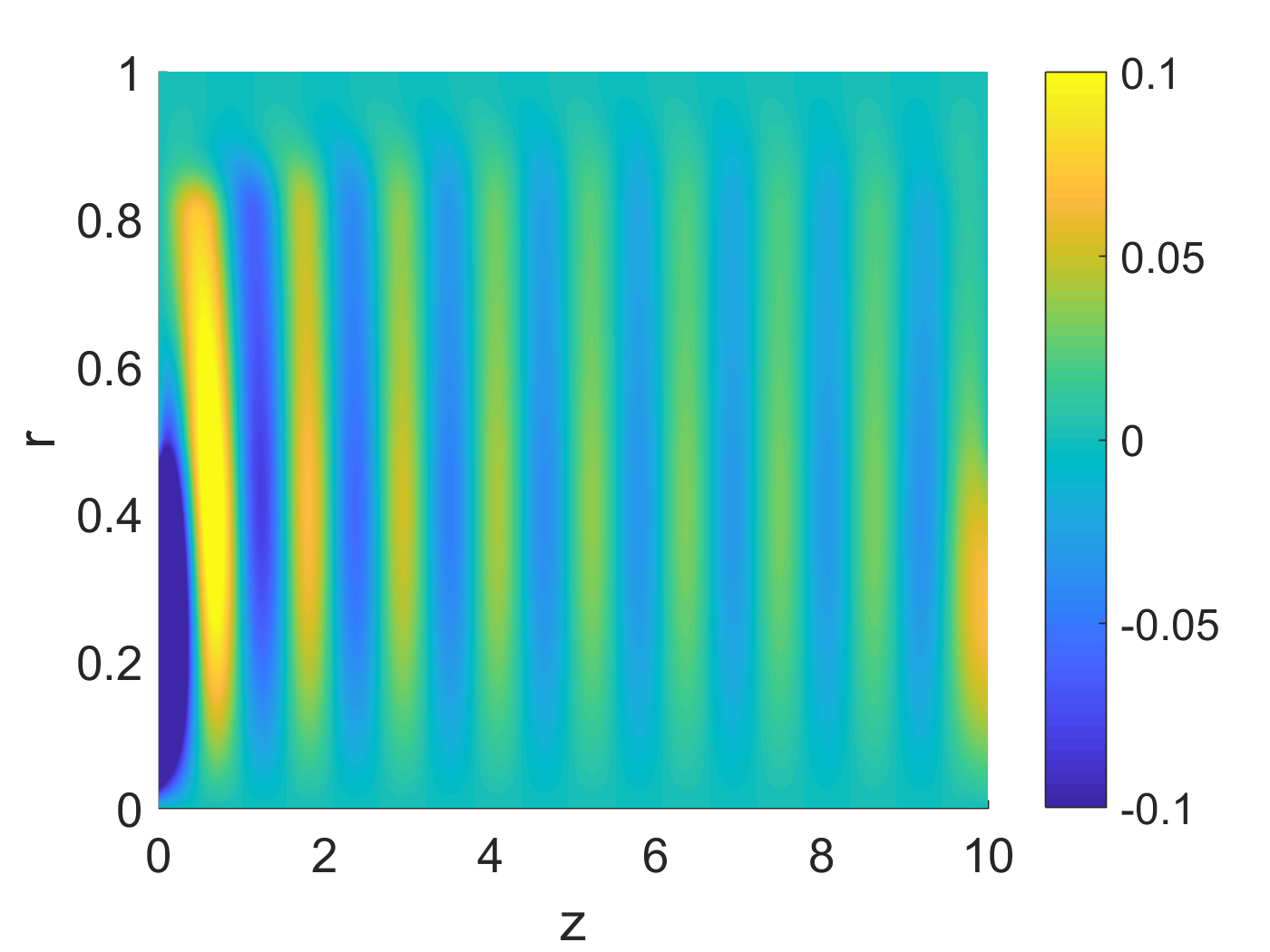}
\end{tabular}
\caption{The real part of the magnetic field $H_\theta$ for the considered axon structure shown in Figure~\ref{fig:setup}. Left: Figure~\ref{fig:setup}(a), middle: Figure~\ref{fig:setup}(b), right: Figure~\ref{fig:setup}(c).}
\label{fig:Ht}
\end{figure}

\begin{figure}[htbp]
\centering
\begin{tabular}{ccc}
\includegraphics[scale=0.1]{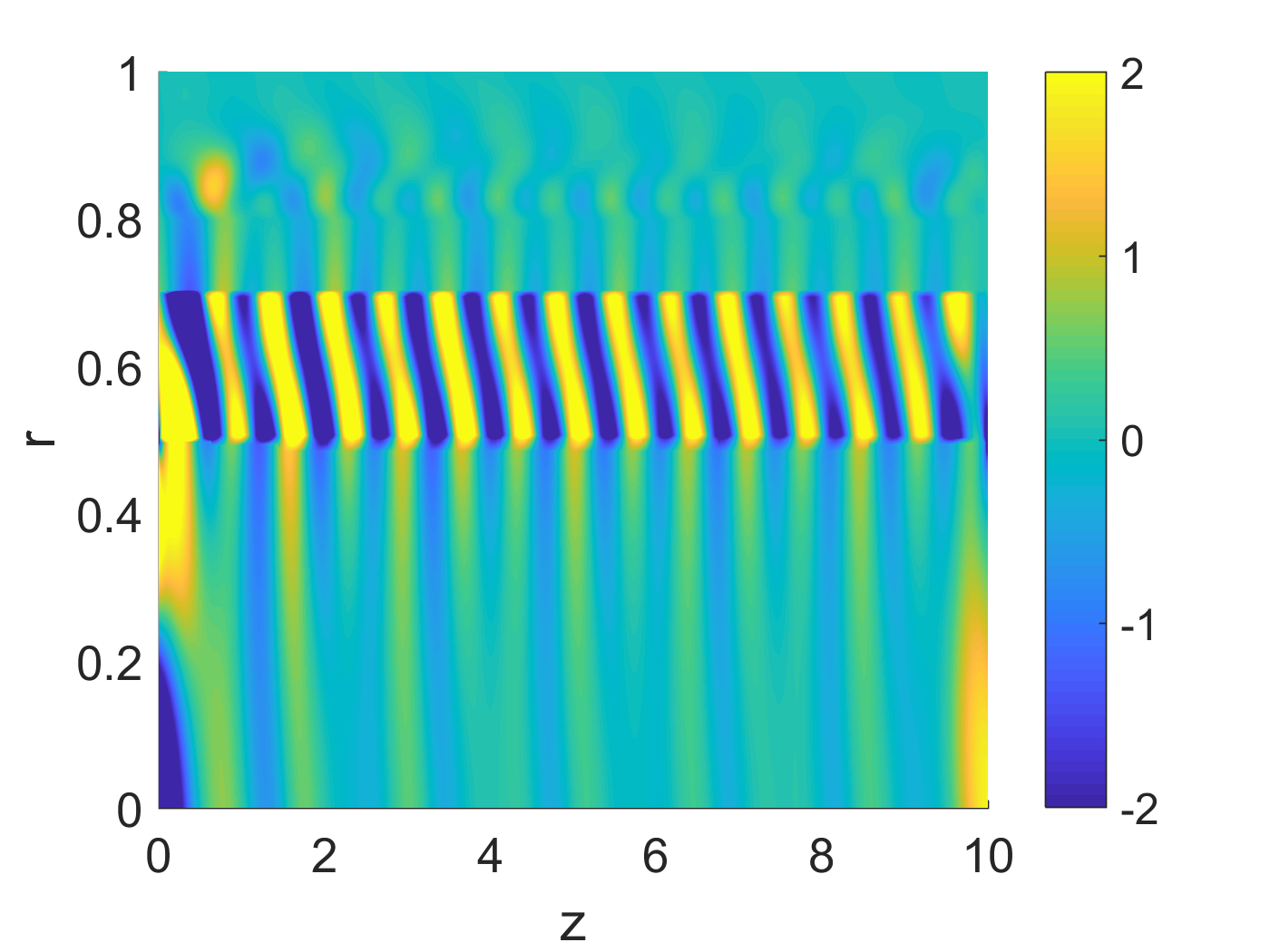} &
\includegraphics[scale=0.1]{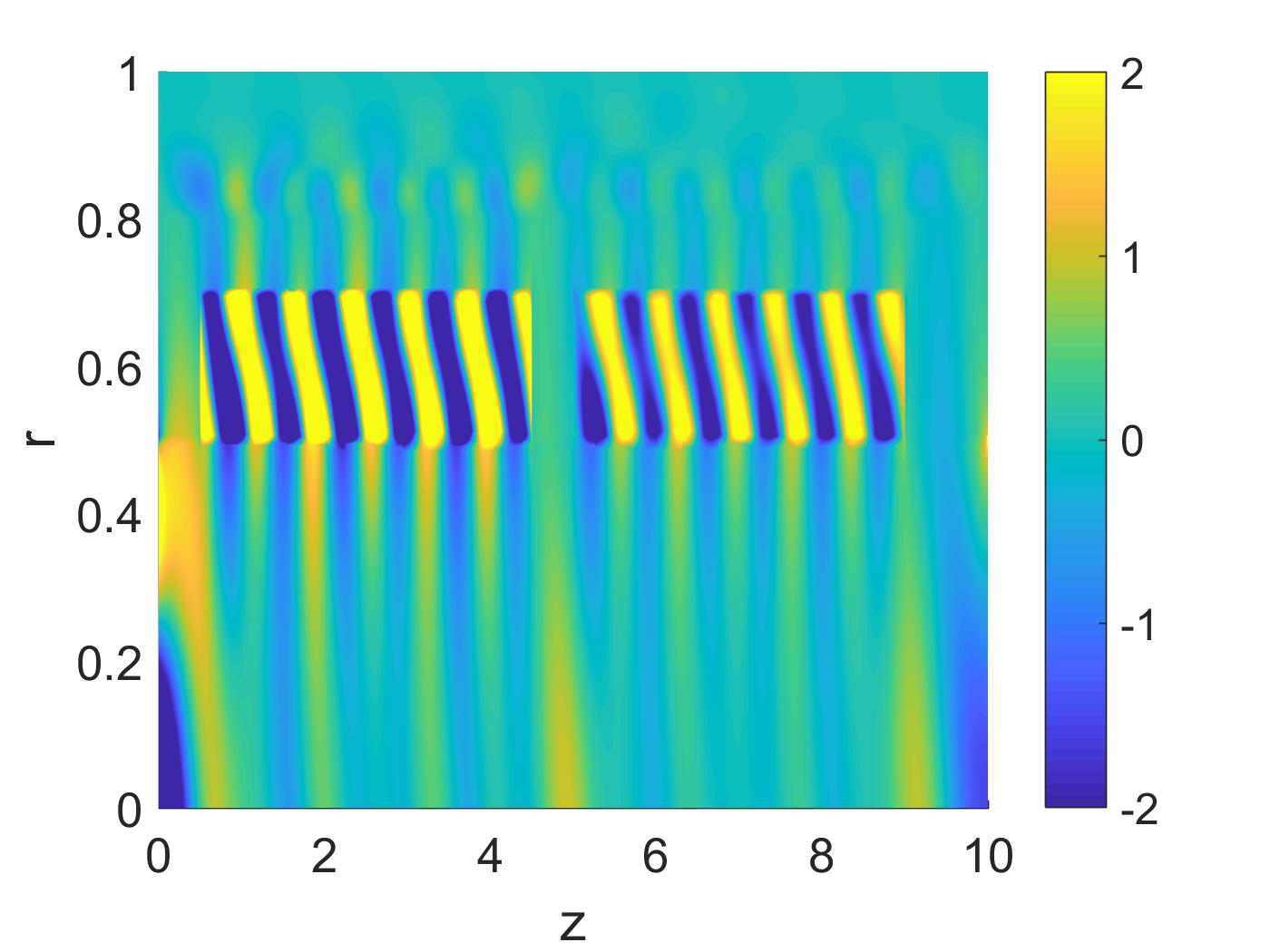} &
\includegraphics[scale=0.1]{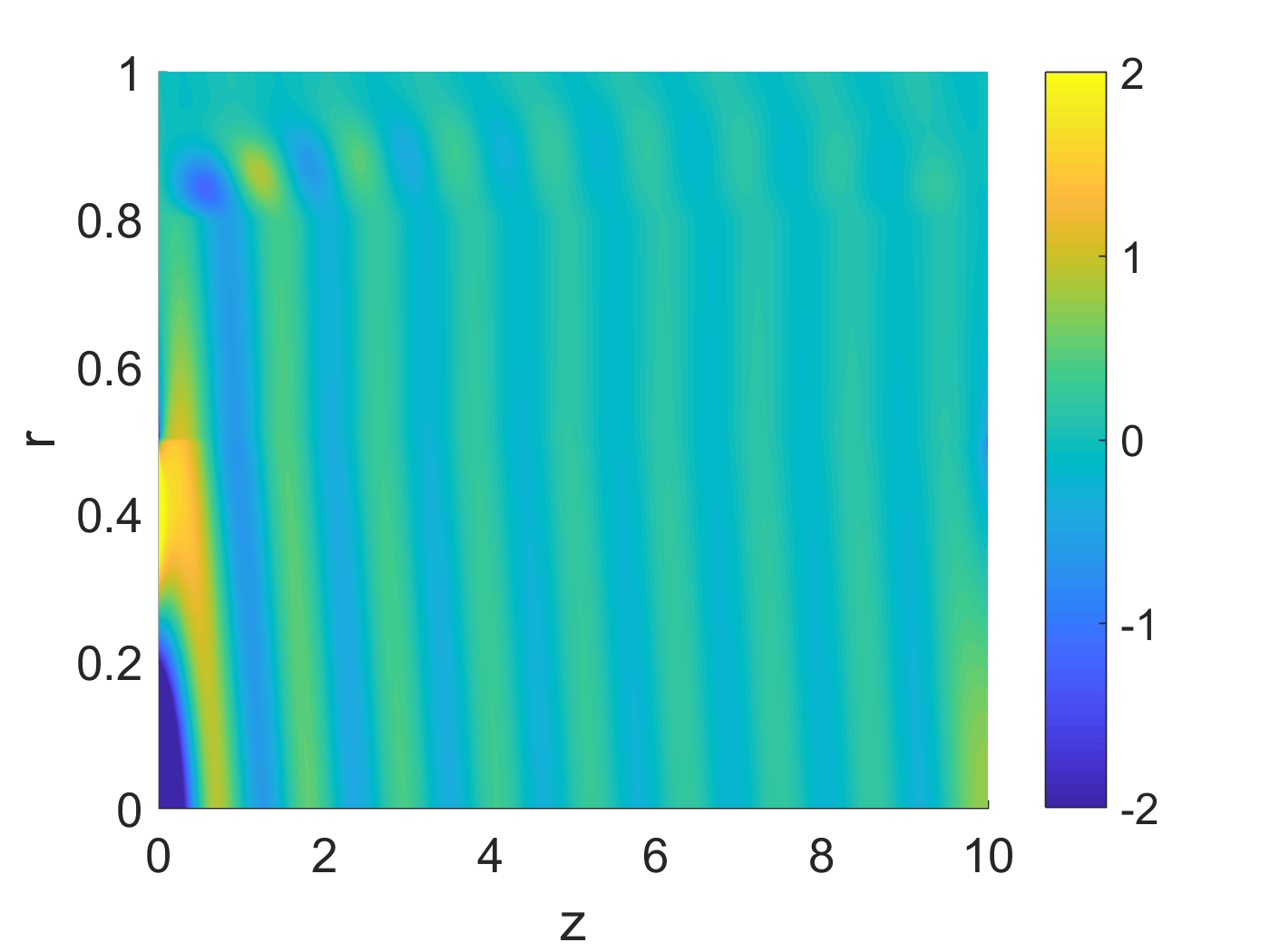}
\end{tabular}
\caption{The real part of the magnetic field $E_z$ for the considered axon structure shown in Figure~\ref{fig:setup}. Left: Figure~\ref{fig:setup}(a), middle: Figure~\ref{fig:setup}(b), right: Figure~\ref{fig:setup}(c).}
\label{fig:Ez}
\end{figure}

\begin{figure}[htbp]
\centering
\begin{tabular}{ccc}
\includegraphics[scale=0.1]{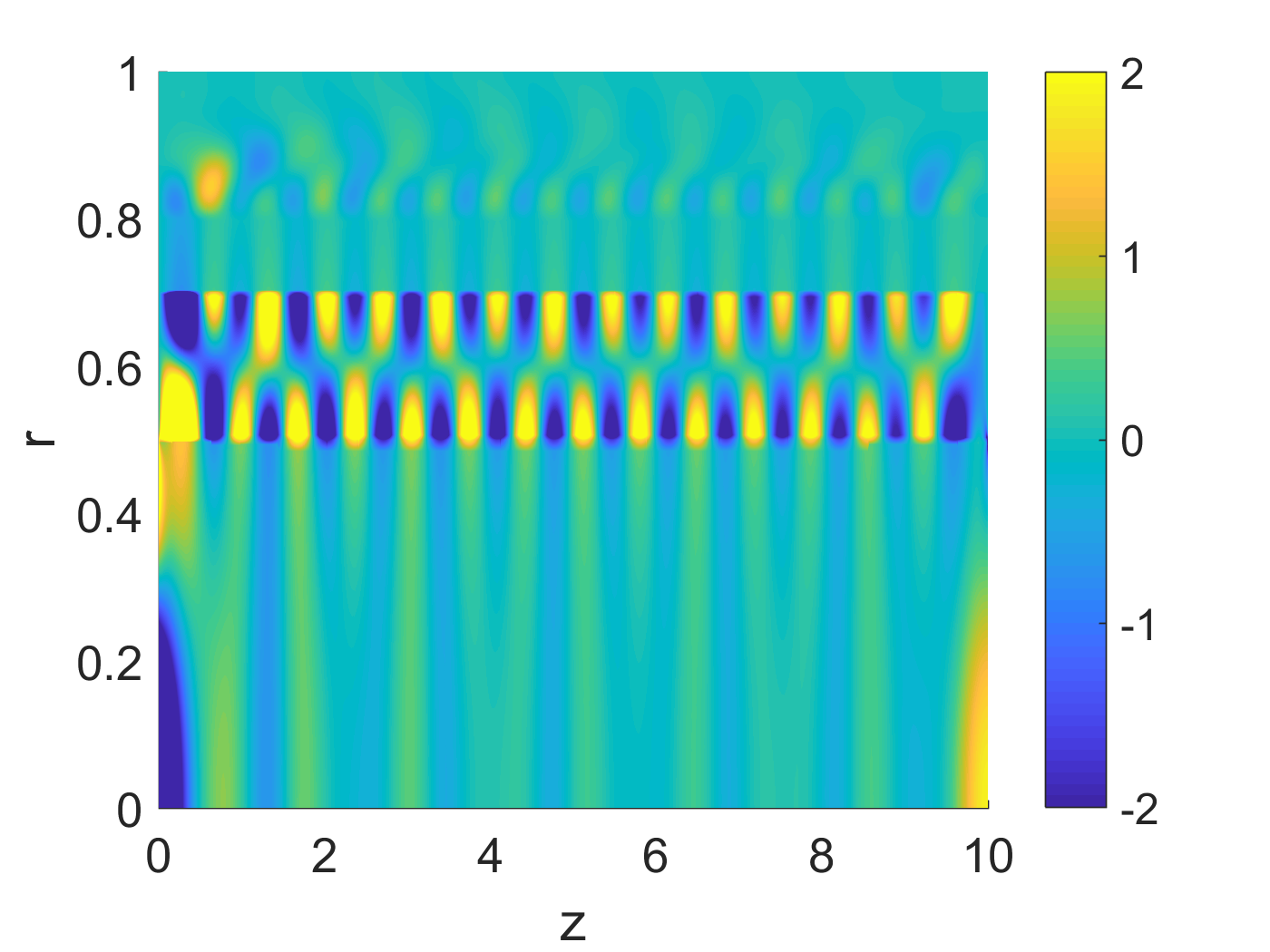} &
\includegraphics[scale=0.1]{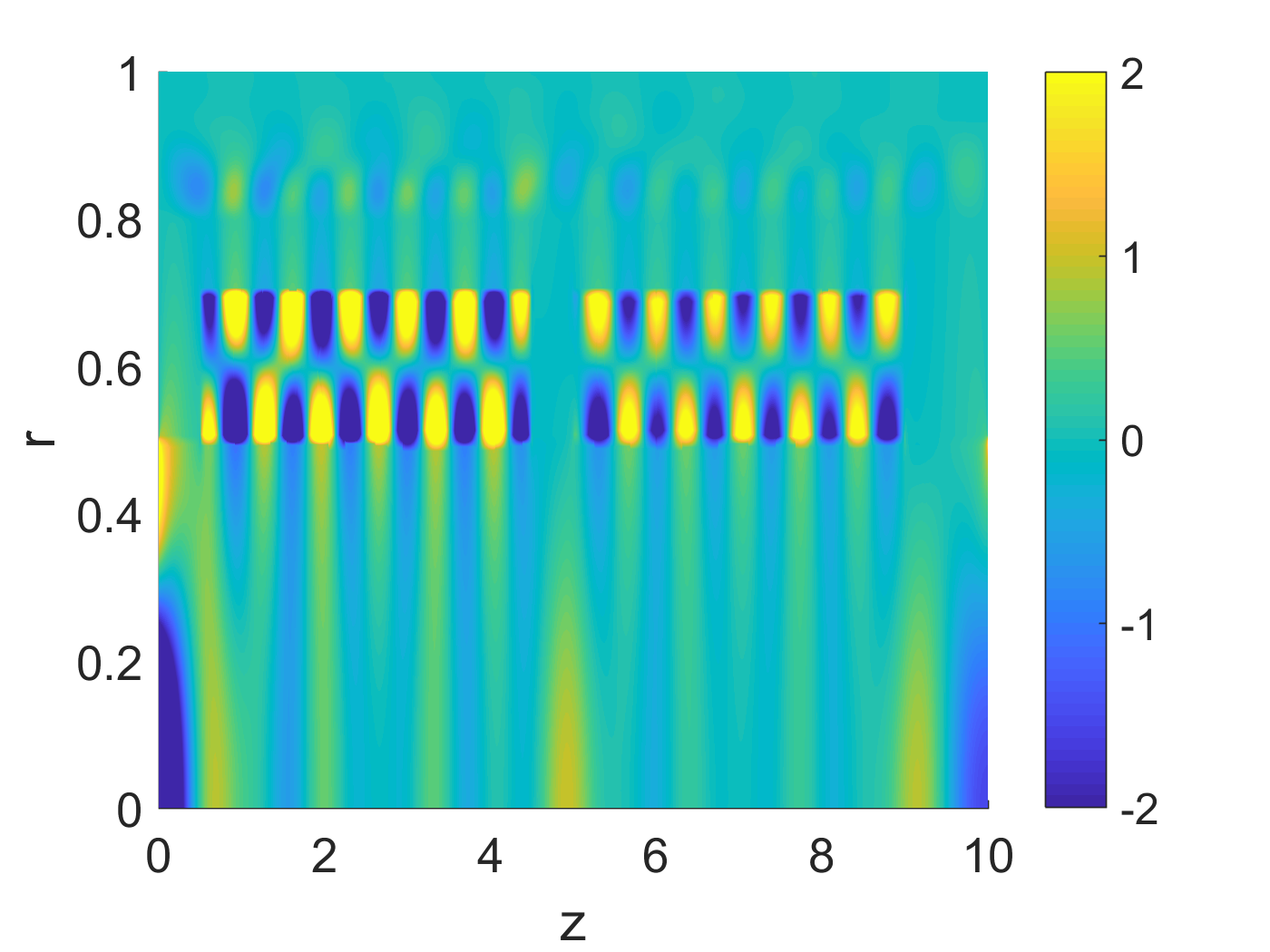} &
\includegraphics[scale=0.1]{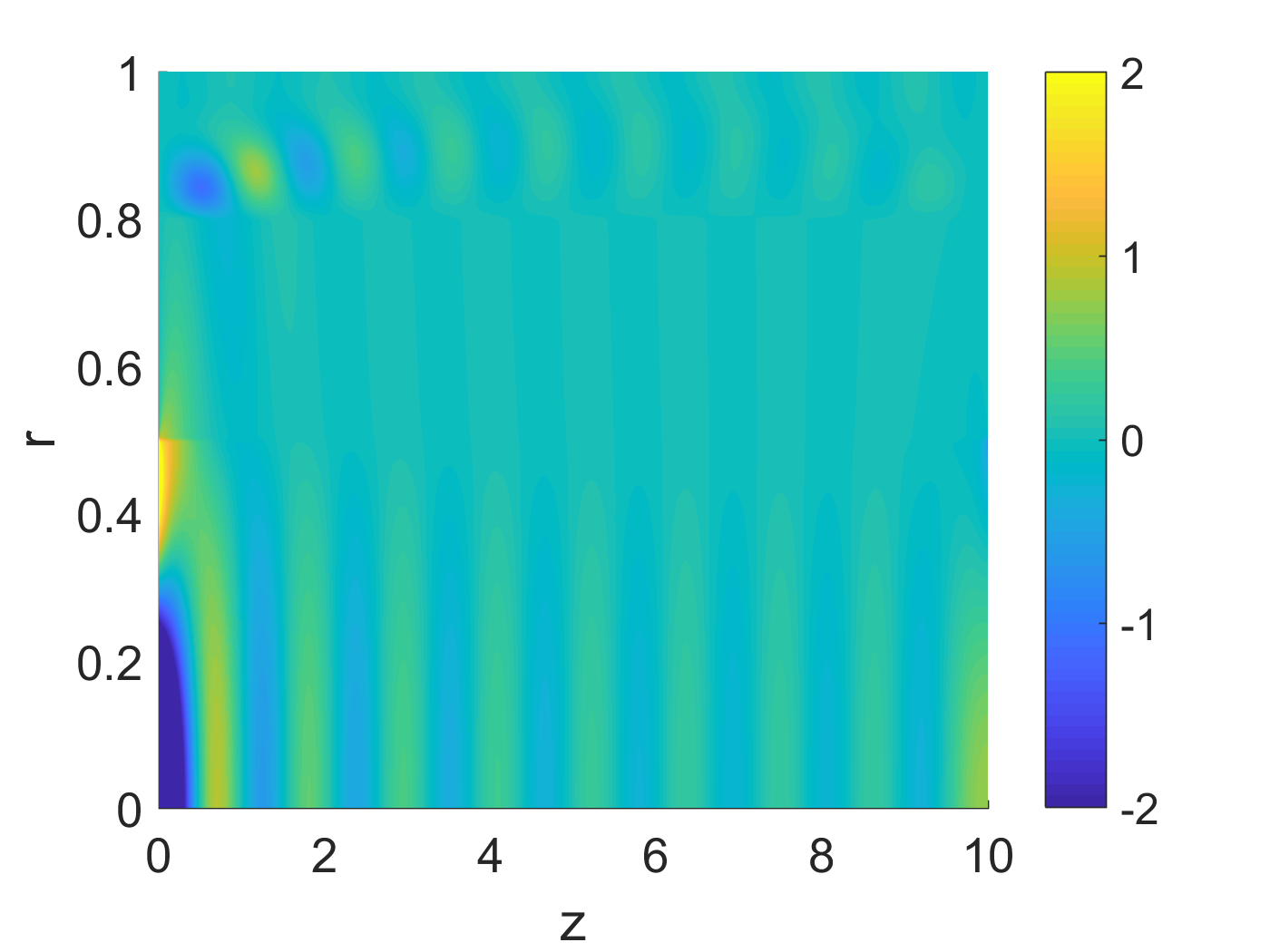}
\end{tabular}
\caption{The real part of the magnetic field $E_r$ for the considered axon structure shown in Figure~\ref{fig:setup}. Left: Figure~\ref{fig:setup}(a), middle: Figure~\ref{fig:setup}(b), right: Figure~\ref{fig:setup}(c).}
\label{fig:Er}
\end{figure}

\section*{Acknowledgement}
XJ was supported in part by the China NSF Grant 12171017. ML was partially supported by the China Postdoctoral Science Foundation 2020TQ0344 and China NSF Grant 12101597. TY gratefully acknowledges support from China NSF Grants 12288201 and 12171465. WZ was supported in part by the China NSF for Distinguished Young Scholars 11725106 and by China NSF major project 11831016.

\end{document}